\documentclass[12pt]{article}
\usepackage{latexsym}
\usepackage{amsmath}
\usepackage{amssymb}
\usepackage{amsthm}
\usepackage{mathtools}
\usepackage{dsfont}
\usepackage{bbm}
\usepackage{color}
\usepackage[tmargin=1.25in,bmargin=1.25in,lmargin=1.1in,rmargin=1.1in]{geometry}

\usepackage{graphicx}
\usepackage{epsfig}
\usepackage{hyperref} 

\definecolor{Red}{rgb}{1,0,0}
\definecolor{halfgray}{gray}{0.55}
\definecolor{webgreen}{rgb}{0,.5,0}
\definecolor{webbrown}{rgb}{.6,0,0}
\definecolor{Maroon}{cmyk}{0, 0.87, 0.68, 0.32}
\definecolor{RoyalBlue}{cmyk}{1, 0.50, 0, 0}
\definecolor{Black}{cmyk}{0, 0, 0, 0}

\hypersetup{
    colorlinks=true, linktocpage=true, pdfstartpage=1, pdfstartview=FitV,
    breaklinks=true, pdfpagemode=UseNone, pageanchor=true, pdfpagemode=UseOutlines,
    plainpages=false, bookmarksnumbered, bookmarksopen=true, bookmarksopenlevel=1,
    hypertexnames=true, pdfhighlight=/O,
    urlcolor=webbrown, linkcolor=RoyalBlue, citecolor=webgreen,
}

\newtheorem{thm}{Theorem}[section]

\newtheorem{lem}[thm]{Lemma}

\newtheorem{remark}[thm]{Remark}

\makeatletter\@addtoreset{equation}{section}\makeatother

\begin{document}

\newcommand{\D}{\mathrm{d}}
\newcommand{\R}{\mathbb{R}}
\newcommand{\EE}{\mathbb{E}}
\newcommand{\PP}{\mathbb{P}}
\newcommand{\var}{\operatorname{Var}}
\newcommand{\cov}{\operatorname{Cov}}
\newcommand{\G}{\mathcal{G}}
\newcommand{\B}{\mathcal{B}}
\newcommand{\e}{\mathrm{e}}
\newcommand{\p}{\mathrm{P}}
\newcommand{\N}{\mathbb{N}}
\newcommand{\K}{\mathbb{K}}
\newcommand{\T}{\mathbb{T}}
\newcommand{\bbP}{\mathbb{P}}
\newcommand{\bbE}{\mathbb{E}}
\newcommand{\bfP}{\mathbf{P}}
\newcommand{\bfE}{\mathbf{E}}
\newcommand{\calE}{\mathcal E}
\newcommand{\GW}{\operatorname{GW}}
\newcommand{\Tra}{\operatorname{Tra}}
\newcommand{\RW}{\operatorname{RW}}
\newcommand{\RWP}{\RW\times\bfP}
\newcommand{\F}{\mathcal F}
\newcommand{\esc}{\operatorname{esc}}

\pagenumbering{arabic}

\newcommand{\Peaug}{\mathrm{P}_\varepsilon^{\mathrm{aug}}}
\newcommand{\Pehat}{\hat{\mathrm{P}}_\varepsilon}
\newcommand{\PPehat}{\hat{\mathbb{P}}_\varepsilon}
\newcommand{\Eehat}{\hat{\mathrm{E}}_\varepsilon}
\newcommand{\EEehat}{\hat{\mathbb{E}}_\varepsilon}

\title{Biased random walk on a locally infinite Galton-Watson tree}
\author{Jan Nagel} 
\maketitle

\begin{abstract} 
We propose a new variant of a biased random walk on a Galton-Watson tree, in which some vertices have infinitely many children. These vertices induce a natural regeneration structure, which allows to study differentiability and also monotonicity of the speed of the random walk in the bias parameter. In particular, we give explicit examples of offspring distributions for which the speed is strictly monotone increasing. Additionally, we show that the speed in this model is the limit of the speed of random walks on appropriate locally finite trees. 
\end{abstract}

{\bf Keywords:}
Random walk in random environment, Galton-Watson trees, effective velocity

\smallskip

{\bf MSC 2020:} {
60K37, 
60F15, 
60K40 
}

\section{Introduction}
\label{sec:intro}

We consider a biased random walk on a Galton-Watson tree, where some vertices have infinitely many direct offspring, and which is constructed as follows. Let $\nu$ be a probability measure on $\mathbb{N}_0$ with finite mean $m(\nu)$ and $\alpha\in (0,1)$. Starting with the root, the number of children of a vertex $v$ is sampled from $\nu$ with probability $1-\alpha$, and with probability $\alpha$, attach to $v$ countably infinite many children. The vertices with infinitely many children will be referred to as marked vertices. We write $d(v)$ for the number of children of a vertex $v$ and $v^*$ for the parent vertex of $v$. 

Given a realization $T$ of the tree conditioned on survival and a bias parameter $\beta\geq 0$, the biased random walk $(X_n)_{n\geq 0}$ is the process starting at the root $\rho$, and when $v\neq \rho$ is an unmarked vertex, the random walk moves to a neighboring vertex with probabilities 
\begin{align} \label{eq:transitionprob}
P_{T,\beta}(X_{n+1}=v^* \mid X_n=v) = \frac{1}{1+d(v)\beta} , \qquad P_{T,\beta}(X_{n+1}=w \mid X_n=v) = \frac{\beta}{1+d(v)\beta} ,
\end{align}
for $w$ a descendant of $v=w^*$.  When the random walk is at a marked vertex with infinitely many descendants, it moves to the first unvisited descendant. When at $\rho$ and the root is not marked, the random walk always moves to a uniformly chosen descendant. This process is Markovian only up to the hitting time of a marked vertex, but it is a Markov process when we keep track of the visited children of the last visited marked vertex.

The biased random walk with transition probabilities \eqref{eq:transitionprob} is a classical model for a random walk in random environment. It was introduced by \cite{Lyons1990} and a law of large numbers was proven by \cite{LyoPemPer96a,LyoPemPer96b}, with limit the effective velocity or speed $v(\beta)$ of the random walk. Except for $\beta=1$ or when the offspring distribution is concentrated on a single integer, there is no closed-form expression for the speed, despite an elegant formula for $v$ proven by \cite{aidekon2014biased}. Central limit theorems for the biased random walk were considered by \cite{PerZei2008} and large deviation principles by \cite{Dembo2002largeDeviations}. Several authors have studied properties of the speed $v$ as a function of the bias parameter. Differentiability at the critical value for transience was proven by \cite{benarous2013einstein}, and by \cite{bowditch2020differentiability} in the parameter range where the time spend in finite subtrees has finite second moment. The effects of such trapping phenomenon was studied in detail by \cite{arous2012biased}, and on conditioned critical trees by \cite{CroydonFriberghKumagai2013CriticalTrees}. For trees without leaves, it was conjectured in \cite{LyoPemPer96a}, that $v$ is increasing in the bias parameter. For such trees, \cite{benarous2014monotonicity} show that the speed is eventually increasing with a coupling argument, and A\"id\'ekon gives an analytic argument for monotonicity on $[2,\infty)$, see the survey article \cite{BenArousFribergh2016Survey}. So far, monotonicity of $v$ on the full parameter range is still unproven.

We are interested in the speed of the random walk in the new model. Visiting the marked vertices leads to a regeneration event: the random walk visits a new subtree it has never seen before. This makes it easy to prove, for example, the existence of an asymptotic speed. We also study the differentiability of the speed and prove monotonicity of the speed for specific offspring distributions. 

In the following, we write $P_{T,\beta}$ for the quenched law of the biased random walk on a tree $T$ starting at the root, $\mathrm{P}$ for the law of the tree conditioned on survival and $\bbP_\beta$ for the annealed law, when we average over $P_{T,\beta}$ with respect to $\mathrm{P}$.  We denote by $|v|$ the graph distance of $v\in T$ to the root, by $f$ the generating function of the offspring law $\nu$ and write $p_k$ for $\nu(\{k\})$. In particular, $p_0$ is the probability that an unmarked vertex is a leaf. 
The first result concerns the existence of the speed.  

\begin{thm} \label{thm:speedinfinite}
For all $\beta> 0$ there exists a constant $v_\infty(\beta)$, such that $\PP_\beta$-almost surely 
\begin{align*}
\lim_{n\to \infty} \frac{|X_n|}{n} = v_\infty(\beta) .
\end{align*}
If $p_0=0$ or $p_0=1$, we have $v_\infty(\beta)>0$ for all $\beta> 0$. If $p_0\in(0,1)$,  $v_\infty(\beta)>0$ if and only if $\beta <\beta_c=((1-\alpha)f'(q_\infty))^{-1}$, where $q_\infty$ is the unique solution to $q_\infty = (1-\alpha)f(q_\infty)$.  
\end{thm}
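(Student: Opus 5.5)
We use the regeneration structure created by marked vertices. The walk is transient: a marked vertex is never revisited in the sense that its infinitely many children give fresh subtrees, and the walk cannot stay forever in a finite part. Define regeneration times $\tau_1<\tau_2<\dots$ as the successive times at which the walk first enters a fresh child of a marked vertex and afterwards never returns to that marked vertex. More convenient is to use the times the walk steps from a marked vertex $v$ into a new child $w$. From $w$ the walk either returns to $v$ in finite time, and then immediately enters another fresh child, or it escapes to infinity in the subtree $T_w$. This gives a geometric number of trials at each marked vertex. Each trial uses an independent copy of the environment $T_w$ (unconditioned GW with marks). Conditionally on the escape, the future depends only on the fresh tree $T_w$ conditioned on the walk never returning to $v$. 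So the standard Sznitman--Zerner argument yields that the increments $(\tau_{k+1}-\tau_k,\ |X_{\tau_{k+1}}|-|X_{\tau_k}|)_{k\ge1}$ are i.i.d. under $\PP_\beta$. The first increment has a different but a.s. finite law.

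The first step is to show that regenerations happen infinitely often and that the height increments are integrable. Along the escape ray the walk hits infinitely many marked vertices a.s. Each vertex is marked with probability $\alpha$, and the event of a last escape through a marked vertex reduces to a positive escape probability from a fresh subtree. That probability is positive whenever the unmarked-plus-marked tree is transient for the walk. This holds because a marked vertex alone already gives transience: from a marked vertex the walk never is forced back. The depth increment $|X_{\tau_{k+1}}|-|X_{\tau_k}|$ is dominated by the distance to the next marked vertex on the escape path. That distance has a geometric tail, so its expectation is finite. The law of large numbers for renewal--reward processes then gives $|X_n|/n\to \EE[\Delta|X|]/\EE[\Delta\tau]=:v_\infty(\beta)$, which is $0$ if $\EE[\Delta\tau]=\infty$. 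Between regenerations $|X_n|$ is sandwiched, and this handles the interpolation.

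The second step characterizes when $\EE[\Delta\tau]<\infty$. The time between regenerations consists of two parts:
\begin{itemize}
\item[(a)] finitely many failed excursions into fresh children of marked vertices;
\item[(b)] the time spent in finite traps hanging off the path between consecutive marked vertices.
\end{itemize}
The key observation is that any subtree not containing a marked vertex is a GW tree with offspring law $(1-\alpha)\nu+\alpha\delta_\infty$ restricted to unmarked vertices. Equivalently, it has generating function $(1-\alpha)f$ on the unmarked part, which is subcritical since $(1-\alpha)f(1)<1$, i.e. it is killed. Traps are exactly finite subtrees free of marks. Conditioned on being finite, with extinction probability $q_\infty$ solving $q_\infty=(1-\alpha)f(q_\infty)$, they are GW trees with offspring generating function $(1-\alpha)f(q_\infty s)/q_\infty$, whose mean is $(1-\alpha)f'(q_\infty)$. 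Following \cite{LyoPemPer96b}, the expected time a $\beta$-biased walk spends in such a trap is a sum over generations of (mean)$^n\beta^n$. So it is finite iff $\beta(1-\alpha)f'(q_\infty)<1$, i.e. $\beta<\beta_c$. If $p_0=0$ there are no finite traps, since every unmarked vertex has a child and every finite subtree must contain a mark. In that case only excursion times of type (a) matter, and these are finite in mean for all $\beta>0$ by a comparison with the walk on a tree with geometric mark-free segments. If $p_0=1$, every unmarked vertex is a leaf, so traps have depth one and finite mean time. For $p_0\in(0,1)$, the lower bound $\EE[\Delta\tau]=\infty$ for $\beta\ge\beta_c$ comes from a single trap hanging off the path: its expected exit time is infinite. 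The upper bound for $\beta<\beta_c$ comes from summing the finitely many traps per regeneration, whose number has finite mean.

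I expect the main obstacle to be the exact integrability computation in the second step. One must show that failed excursions into fresh children (a) also have finite mean time for every $\beta>0$. This requires controlling a walk in a tree containing further marked vertices, which is recursive. I would handle it by showing that the return time to $v$, on the event of return, is stochastically dominated by a trap-type quantity plus a geometric number of i.i.d. copies of itself, and then solving the resulting fixed-point inequality for the mean.
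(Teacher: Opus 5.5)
Your first step and your trap computation are fine. The conditioned offspring generating function $(1-\alpha)f(q_\infty s)/q_\infty$, the divergence criterion $\beta(1-\alpha)f'(q_\infty)\ge 1$, and ``one trap entered with positive probability gives infinite mean'' for $\beta\ge\beta_c$ are exactly what the paper uses. But every positivity claim rests on finiteness of the mean renewal time, and you do not prove it.

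Your decomposition into (a) failed excursions from marked vertices and (b) finite mark-free traps is incomplete. It leaves out the time the walk spends in the unmarked cluster outside finite traps: walking along the segment between consecutive marks, and making excursions into side branches that contain marks further down but from which the walk returns before reaching them. For $p_0=0$ this is the main contribution besides (a), so ``only excursion times of type (a) matter'' is false. The proposed comparison with ``geometric mark-free segments'' is also not available. The unmarked cluster is a Galton--Watson tree with mean offspring $(1-\alpha)m(\nu)$, which can be supercritical (e.g.\ $\nu=\delta_d$ with $(1-\alpha)d>1$), so mark-free regions are bushy, not segments.

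Conversely, the recursion you fear in (a) does not arise. The walk never steps from a marked vertex to its parent. Hence an excursion that returns to a marked vertex $v$ never meets another marked vertex and stays in the unmarked cluster of the fresh child.

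The missing ingredient is a $\beta$-uniform bound on the time spent in the unmarked cluster. The paper obtains it by counting discoveries rather than decomposing along the escape ray.
\begin{itemize}
\item Each newly visited vertex is marked with probability $\alpha$, independently of everything revealed so far. So the index $G$ of the first marked discovery is dominated by a geometric variable $\hat G$ independent of the walk.
\item Between the $(k-1)$-st and $k$-th discovery the walk sits in a finite connected set of $k$ unmarked vertices. Lemma~\ref{lem:moment-finitesetescape} bounds its expected exit time by $2k^2$ for every $\beta$ when there are no leaves (quadratic submartingale for $\beta\ge 1$, return-time formula in the pruned tree for $\beta\le 1$).
\item Summing over $k\le \hat G$ gives $\mathbb{E}^*_\beta[\sigma]\le 2\,\mathbb{E}[\hat G^3]<\infty$ (Lemma~\ref{lem:moment-uniform2nd}).
\end{itemize}
The same counting also justifies your geometric tail for the depth increments. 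That tail does not follow from ``each vertex is marked with probability $\alpha$'', because the escape ray is selected by the walk. For $p_0\in(0,1)$ the paper instead transfers the criterion of \cite[Theorem 4.1]{LyoPemPer96b}: finite mean renewal time is equivalent to finite mean trap time.

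Finally, the paper's renewal structure is simpler than yours. The hitting times $\sigma_k$ of $M$ are stopping times, and by the one-way property of marked vertices each of them starts a fresh excursion. So the i.i.d.\ increments follow from the strong Markov property alone, with no Sznitman--Zerner conditioning on the future.
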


In the extreme case $\beta=0$, the random walk always moves back towards the root, except when it is at a marked vertex. When we condition on the root being a marked vertex, we still have a law of large numbers for $\beta=0$ with explicit value $v_\infty(0)=\frac{\alpha}{2-\alpha}$, see Section \ref{sec:infinitespeed}. The same behavior holds when $\nu=\delta_0$, in which case $p_0=1$ and conditioning on an infinite tree guarantees a marked root. Then the speed equals $\frac{\alpha}{2-\alpha}$ for all $\beta\geq 0$. 

The tree with potentially infinitely many direct descendants can be approximated in a natural way by classical Galton-Watson trees, when we attach to a marked vertex only $N$ descendant, for $N\in \mathbb N$. This yields a Galton-Watson tree with offspring law
\begin{align*}
\nu_N = \alpha \delta_N + (1-\alpha)\nu . 
\end{align*}
that is, with probability $(1-\alpha)$, a vertex has a random number of offspring distributed according to the law $\nu$, and with probability $\alpha$, the vertex has $N$ offspring. Let $m(\nu)$ denote the mean of the offspring law $\nu$, so that 
\begin{align*}
m_N = m(\nu_N) = \alpha N + (1-\alpha) m(\nu) 
\end{align*} 
is the mean of $\nu_N$. Assume $N$ is sufficiently large, such that $m_N>1$ and the tree is supercritical. We write $\mathrm{P}^{(N)}$ for the law of such a Galton-Watson tree, conditioned on survival, and $\bbP_\beta^{(N)}$ for the annealed law of the random walk with transition probabilities \eqref{eq:transitionprob}. 

The law of large numbers by \cite{LyoPemPer96b} shows that for all $\beta\geq 0$ there exists a deterministic constant $v_N(\beta)$, such that $\mathbb P^{(N)}_{\beta}$-almost surely
\begin{align*}
\lim_{n\to \infty} \frac{|X_n|}{n} = v_N(\beta) .
\end{align*}
We can show that $v_N$ converges to the speed in the new model with infinitely many children. For this we assume that the tree has no leaves, $p_0=0$. In this case, the speed $v_N(\beta)$ is positive if and only if $\beta>1/m_N$. 
%

\begin{thm} \label{thm:speedconvergence}
When $p_0=0$, we have for all $\beta>0$
\begin{align*}
\lim_{N\to \infty} v_N(\beta) = v_\infty(\beta). 
\end{align*}
\end{thm}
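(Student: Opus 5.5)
We will write both speeds as ratios of regeneration quantities and show that each piece converges as $N\to\infty$. Assume $p_0=0$, so that every vertex has at least one child and there are no traps.

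\textbf{Regeneration in the infinite model.} The starting point is the structure behind Theorem \ref{thm:speedinfinite}. Consider the successive first visits to \emph{fresh} marked vertices, i.e.\ marked vertices whose subtree has never been entered before. After the walk reaches such a vertex $u$, it explores a new copy of the tree. The pieces between consecutive such times are therefore i.i.d.\ under the annealed law $\bbP_\beta$, up to the first piece. This gives
\[
v_\infty(\beta)=\frac{\EE[\Delta_\infty]}{\EE[\tau_\infty]},
\]
where $\tau_\infty$ is the time between consecutive fresh marked vertices and $\Delta_\infty$ is the corresponding gain in depth.

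\textbf{Matching structure for $\nu_N$.} For finite $N$ there is no exact regeneration of this kind, because the walk may return to $u$ and move into a sibling subtree of the same marked vertex. I would instead use the classical regeneration times of \cite{LyoPemPer96b}: fresh levels that are never backtracked. These are available when $\beta>1/m_N$, which holds for $N$ large since $p_0=0$ and $m_N\to\infty$. Rather than compare these with the infinite-model decomposition directly, I would couple.

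\textbf{Coupling.} I would couple the walk on the $N$-tree with the walk on the infinite tree as follows.
\begin{itemize}
\item Realise the $N$-tree by keeping the first $N$ children of every marked vertex.
\item At a marked vertex $u$, the $N$-walk moves to its parent with probability $1/(1+N\beta)$ and otherwise to a uniform child.
\item At $u$ the infinite walk moves to the first unvisited child.
\end{itemize}
The two walks agree until one of the following discrepancies occurs:
\begin{enumerate}
\item[(a)] the $N$-walk steps from a marked vertex to its parent, which has probability $O(1/N)$ per visit;
\item[(b)] the $N$-walk at a marked vertex chooses an already visited child, where the infinite walk would take a fresh one;
\item[(c)] more than $N$ children of one marked vertex are needed.
\end{enumerate}
Within a single regeneration period the number of visits $L$ to marked vertices is a tight random variable with the same law in both models. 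If $L$ has a moment bound independent of $N$, the probability of any discrepancy in one period is $O(\EE[L^2]/N)$. (For (b), each choice of a visited child has probability at most (number visited)$/N$.) Uniform tail control on $(\tau,\Delta)$ then gives convergence of the ratio.

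\textbf{Main obstacle.} The hard step is that the $N$-model does not regenerate at marked vertices, because of event (a): errors need not stay local. I would handle this with a renewal-reward argument on blocks. Call a marked vertex $N$-good if after the first visit the walk never returns to its parent; its probability is $1-O(1/N)$ uniformly in the environment, since the effective conductance of the subtree is at least $N\beta$ times that of the edge to the parent. Under $\bbP^{(N)}_\beta$, the times of first visits to good fresh marked vertices that are never backtracked are genuine regeneration times. Between them, the increments agree with the infinite-model increments $(\tau_\infty,\Delta_\infty)$ outside an event of probability $O(1/N)$.

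For the ratio to converge we also need uniform integrability of $\tau$ under $\bbP^{(N)}_\beta$. I would get this from a geometric bound: from any vertex, the walk reaches a marked vertex after a number of steps with exponential tails. This uses $\alpha>0$, no leaves, and a drift away from the root on average when $\beta$ is not tiny. For small $\beta$ an additional argument is needed, and I expect this uniform-in-$N$ moment control to be the main technical difficulty. Given it, dominated convergence yields $\EE^{(N)}[\tau]\to\EE[\tau_\infty]$ and $\EE^{(N)}[\Delta]\to\EE[\Delta_\infty]$, and hence $v_N(\beta)\to v_\infty(\beta)$.
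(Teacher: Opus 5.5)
\textbf{Gap: the uniform-in-$N$ moment bound is missing.} The overall plan is sound: convergence in distribution of one regeneration block, plus uniform integrability, then pass to the limit in the ratio. Your coupling (keep the first $N$ children of each marked vertex, relabel unvisited children by exchangeability) is a reasonable hands-on substitute for the paper's direct comparison of path probabilities in Lemma~\ref{lem:regweakconv}.

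The missing ingredient is a bound $\sup_{N\ge N_0}\bbE^{(N)}_\beta[\tau^r]<\infty$, for some $r>1$, on the $N$-model regeneration increments. You leave it open (``given it, dominated convergence yields\dots''), but it is the heart of the theorem, not a technicality. Without it, nothing controls the $O(1/N)$-probability event on which your coupling breaks. After a bad marked vertex, the $N$-walk steps back to the parent, can retreat many generations and start new excursions before the next regeneration. Convergence in distribution of $\tau^{(N)}$ says nothing about $\bbE^{(N)}_\beta[\tau^{(N)}]$ in that situation, and the same applies to the $N$-uniform moment bound on $L$ that you assume.

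Your sketched route does not close this. It is true, uniformly in $\beta$, that the walk reaches a marked vertex quickly (as in Lemma~\ref{lem:moment-uniform2nd}). But in the $N$-model a marked vertex is not a regeneration point. Moreover, for small $\beta$ (e.g.\ $\nu=\delta_1$, $\beta<1$) unmarked vertices have drift \emph{toward} the root, so there is no outward drift to exploit, yet the theorem is claimed for every $\beta>0$.

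What is actually needed, and what the paper proves for level regeneration times, is:
\begin{enumerate}
\item[(i)] an escape probability bounded below uniformly in $N$, namely $\inf_{N\ge N_0}\bbP^{(N)}_\beta(R^c)>0$;
\item[(ii)] exponential tails, uniform in $N$, for the maximal height of an excursion that eventually backtracks, and hence for the regeneration level (Lemma~\ref{lem:momentuniform-height}, via $W_m\le A_{N,\beta}(s)^m$ with $\sup_N A_{N,\beta}(s)<1$ for large $s$);
\item[(iii)] polynomial bounds on the time needed to discover $k$ new vertices, which turn level bounds into time bounds (Lemma~\ref{lem:momentuniform}).
\end{enumerate}
Step (i) is exactly where $p_0=0$ enters. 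Without leaves there is an increasing coupling $T_N\subset T_{N+1}\subset T_\infty$, so by Rayleigh monotonicity the escape probability is nondecreasing in $N$. With $p_0>0$ and conditioning on survival, this coupling fails. Your proposal never uses $p_0=0$ beyond ``no traps,'' which is a symptom of the missing step.

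\textbf{A related error: the goodness estimate.} You claim a marked vertex is $N$-good with probability $1-O(1/N)$ \emph{uniformly in the environment}, because the subtree's effective conductance is at least $N\beta$ times that of the parent edge. The inequality is reversed. The $N$ child edges together have conductance $N\beta$ times the parent edge, and the effective conductance to infinity through the subtree is \emph{at most} this. It can be arbitrarily small when the child subtrees are unfavourable, for instance long unmarked stretches with $\beta<1$.

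An $O(1/N)$ bound holds only after averaging over the environment. Proving it needs the uniform lower bound (i) for the $N$ i.i.d.\ child subtrees, followed by a large-deviation count of children whose escape probability is at least some $\delta>0$. So both your goodness estimate and your uniform integrability depend on the uniform-in-$N$ estimates that the proposal does not supply.
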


Since $v_N(0)=0$ for all finite $N$, but $v_\infty(0)=\alpha/(2-\alpha)$, we do not have convergence of the speed in 0. 
The main difficulty in proving the convergence of the speed is to find moment bounds for regeneration times which hold uniformly in $N$. This is the reason for the condition $p_0=0$, since when $p_0>0$, we need to condition the tree on survival. Under this conditioning, there is in general no increasing coupling between trees corresponding to different $\nu_N$. In particular, finite subtrees attached to a backbone tree may increase in size when $N$ is increased. 

We complement the convergence of the speed with a convergence of its derivative, where we need some additional assumptions on the offspring law $\nu$. In Section \ref{sec:infinitespeed}, we show differentiability of $v_\infty$ when $p_0=0$ and $\nu$ has a finite second moment. 
\cite{bowditch2020differentiability} prove differentiablility of the speed $v_N$ under an exponential moment assumption on $\nu$. 

\begin{thm} \label{thm:derivativeconvergence}
If $p_0=0$, and $\sum_{k=1}^\infty p_k\gamma^k<\infty$ for some $\gamma>1$, then we have for all $\beta >0$ 
\begin{align*}
\lim_{N\to \infty} v_N'(\beta) = v_\infty'(\beta). 
\end{align*}
\end{thm}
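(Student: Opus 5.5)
The plan is to write both $v_N$ and $v_\infty$ as ratios of expectations over a regeneration structure that is common to all $N$, to differentiate these ratios in $\beta$, and then to pass to the limit $N\to\infty$ term by term.

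For the infinite model, regeneration happens at the first visits to marked vertices on the path. Since $p_0=0$, between regenerations the walk explores a tree whose unmarked part is an ordinary leafless GW tree, and it stops when it first reaches a new marked vertex. This gives $v_\infty(\beta)=\EE_\beta[|X_{\tau_2}|-|X_{\tau_1}|]/\EE_\beta[\tau_2-\tau_1]$.

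For the $N$-tree, I would couple $T^{(N)}$ with $T$ by keeping only the first $N$ children of each marked vertex. The regeneration for $v_N$ is then the first time the walk hits a marked vertex and afterwards never returns to its parent. The point of this definition is that as $N\to\infty$ the walk at a marked vertex almost never returns: a return has probability $O(1/(N\beta))$ per visit. So the $N$-regeneration increments converge in law to the infinite-model ones. This is essentially the coupling behind Theorem \ref{thm:speedconvergence}, and I take it together with its uniform moment bounds.

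Next I differentiate. Under the quenched law, the $\beta$-derivative of the expectation of a functional $F$ of a path up to a stopping time is $E_{T,\beta}[F\,M_\tau]$. Here $M_n=\sum_{k<n}\partial_\beta\log p_\beta(X_k,X_{k+1})$ is the score martingale, whose increments are bounded by $C(1+d(X_k))/\beta$ or better; a direct computation gives $|\partial_\beta \log p|\le 1/\beta$. Applying this to the numerator and denominator yields
\begin{align*}
v_N'(\beta)=\frac{\EE^{(N)}_\beta[\Delta X\, M_\tau]\,\EE^{(N)}_\beta[\tau]-\EE^{(N)}_\beta[\Delta X]\,\EE^{(N)}_\beta[\tau M_\tau]}{\EE^{(N)}_\beta[\tau]^2},
\end{align*}
with the analogous formula for $v_\infty'$. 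Here $\Delta X$ and $\tau$ denote one regeneration increment.

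Justifying this requires a uniformly integrable family in $N$ and in $\beta$ on a neighbourhood. Since $|M_\tau|\le \tau/\beta$, it is enough to bound $\sup_N\EE^{(N)}_\beta[\tau^{2+\delta}]$, together with a matching bound for $|\Delta X|\tau$, locally uniformly in $\beta$. With these bounds, the difference quotients converge by dominated convergence. The derivative formulas then pass to the limit by the weak convergence of the coupled regeneration increments plus uniform integrability, which gives $v_N'(\beta)\to v_\infty'(\beta)$.

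The main obstacle is the uniform higher-moment bound on the regeneration times. Between marked vertices, the walk on the unmarked GW part can be slow when $\beta$ is small, in particular below $1/m(\nu)$, where the unmarked tree alone is recurrent. The time spent there is controlled by the geometric number of unmarked generations before a marked vertex, but the time within each generation depends on the degrees. This is where the exponential moment $\sum p_k\gamma^k<\infty$ enters: I would bound the excursion time by an exponential functional of the depth and of the local degrees. Then I would show that the $N$-regeneration adds, per failed attempt, only a returning excursion with probability $O(1/N)$ per visit, so these contributions are uniformly summable. Following the approach of \cite{bowditch2020differentiability}, I would first try to establish an exponential tail for $\tau$, uniform in $N\ge N_0$ and in $\beta$ in compacts of $(0,\infty)$.
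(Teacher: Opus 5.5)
Your route differs from the paper's, and the idea behind it is sound. The paper uses the level regeneration times $\tau_k$ for both models. It imports $v_N'$ from Bowditch--Tokushige and passes to the limit with Lemma~\ref{lem:regweakconv} and the uniform bound on $\bar{\bbE}^{(N)}_\beta[\tau_1^3]$ from Lemma~\ref{lem:momentuniform}. It must then identify the limit with the $\sigma$-formula of Theorem~\ref{thm:infinitespeeddiff}, by recognizing both as $(2\beta)^{-1}$ times the covariance of the Gaussian limit of $n^{-1/2}\bigl(|X_n|-nv_\infty(\beta),\,|X_n|-\sum_{k<n}\Delta_\beta(X_k)\bigr)$. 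A finite-$N$ regeneration structure that converges to the $\sigma$-structure would make that identification unnecessary.

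\textbf{First gap: the justification of your formula for $v_N'$ is wrong.} A regeneration time is not a stopping time, because $\{\tau=m\}$ contains the event that the walk never again visits the parent of $X_m$. Differentiating $E_{T,\beta}[F\mathbbm{1}_{\{\tau=m\}}\mid R^c]$ therefore also differentiates the escape probability $P^{X_m}_{T,\beta}(\text{never hit } X_m^*)$ and the normalization $P_{T,\beta}(R^c)$. So $E[F M_\tau]$ is not the derivative.

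Your final formula does have the correct form; it matches the Bowditch--Tokushige formula quoted in Section~\ref{sec:diffconvergence}. The missing idea is the annealed factorization from the proof of Lemma~\ref{lem:regweakconv}. The subtree at a freshly visited $X_m$ is independent of everything explored before, so $\bar{\bbP}_\beta(A\cap\{\tau_1=m\})=\bbP_\beta(A\cap C_m)$ with $C_m$ determined by the path up to time $m$. The escape factors then cancel exactly and $\bar{\bbE}_\beta[F]=\sum_m\bbE_\beta[F\mathbbm{1}_{C_m}]$. The score identity applies to each term, and locally uniform in $\beta$ bounds on $\bar{\bbE}_\beta[\tau^{2+\delta}]$ justify differentiating the series.

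For your marked-vertex structure you would have to set up the analogue of $C_m$ yourself: $X_m$ is a freshly visited marked vertex, $\rho^*$ is not visited, and every freshly visited marked vertex before time $m$ has had its parent revisited before $m$. You would also have to prove those bounds. Citing Bowditch--Tokushige instead gives the formula only for level regeneration times, and translating it to your structure is precisely the covariance argument the paper uses.

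\textbf{Second gap: the moment bounds.} The weak convergence and uniform moment bounds behind Theorem~\ref{thm:speedconvergence} concern the level regeneration times, so you cannot simply take them for your marked regeneration times. You need new versions of both. These must control failed attempts, where the walk returns to $w^*$ after possibly wandering deep into the subtree of $w$; this needs something like the height estimate of Lemma~\ref{lem:momentuniform-height}.

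Your sketch here is not yet an argument, and it aims at the wrong target. Exponential tails of $\tau$ are not needed, and the exponential moment condition on $\nu$ is not what controls these times. In the paper, polynomial moments of every order, uniform in $N\geq N_0$, follow under $p_0=0$ alone. They come from the discovery-time bound built on Lemma~\ref{lem:moment-finitesetescape} together with the uniform escape bound $r_0$ in \eqref{eq:returnprobbound}. The hypothesis $\sum_k p_k\gamma^k<\infty$ enters only to invoke Bowditch--Tokushige for the existence of $v_N'$ and its formula.
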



Our final result in this section concerns the monotonicity of $v_\infty$ in the bias parameter. When all unmarked vertices have the same degree, we can give an explicit criterion for the density of marked vertices, which leads to monotonicity on the full parameter range.

\begin{thm} \label{thm:speedmonotone}
If $\nu = \delta_d$ for $d\in \mathbb N$ and 
\begin{align*}
\alpha >1-\frac{1}{72\mathrm{e} d} ,
\end{align*} 
then $v_\infty$ is strictly monotone increasing on $(0,\infty)$. 
\end{thm}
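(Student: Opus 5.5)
We plan to use the regeneration structure given by the marked vertices and write $v_\infty(\beta)$ as a ratio of expectations over one regeneration period. Since $\nu=\delta_d$ with $d\ge 1$, the tree has no leaves and every subtree is infinite. Consider the excursions from a marked vertex $\varrho$. Each time the walk steps from $\varrho$ into a fresh child $w$, one of two things happens. Either it returns to $\varrho$, which costs some time, or it reaches a marked vertex in the subtree of $w$ before returning, and this is a regeneration. Fresh subtrees are i.i.d., so the number of trials until regeneration is geometric. The renewal theorem then gives
\begin{align*}
v_\infty(\beta)=\frac{\EE[L]}{\EE[\tau]},
\end{align*}
where $L$ is the depth of the first marked vertex reached below $\varrho$ and $\tau$ is the total time of the period. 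We write $\tau$ as the sum of the durations of the failed excursions plus the duration of the successful one. The quantities involved are the following:
\begin{itemize}
\item for a fresh subtree, the probability $s(\beta)$ of reaching a marked vertex before returning to $\varrho$;
\item the expected duration of a failed trial;
\item the expected duration and expected depth gain of a successful trial.
\end{itemize}
All of these are expectations over a $d$-ary tree in which each vertex is independently marked with probability $\alpha$. The walk inside the unmarked region is a biased walk on a $d$-ary tree, killed (regenerated) at the marked vertices.

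The key simplification for $\alpha$ close to $1$ is that unmarked regions are small. The unmarked cluster below a fresh child is a Galton–Watson tree with offspring $\mathrm{Bin}(d,1-\alpha)$. Its mean is $d(1-\alpha)\le 1/(72\mathrm e)\ll 1$, so the cluster is very subcritical. We therefore expand everything as a perturbation around the leading configuration, in which the child itself is marked (probability $\alpha$). In that configuration $L=1$ and each trial succeeds immediately. The dependence on $\beta$ then enters only through the event that the child $w$ is unmarked, which has probability $1-\alpha$, and through the deeper unmarked vertices. We will compute $v_\infty$ exactly as a function of $\beta$ in terms of explicit expansions over the cluster shape. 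The plan is then to differentiate, using the differentiability established in Section \ref{sec:infinitespeed}, and to show $v_\infty'(\beta)>0$.

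Concretely, the derivative of $\EE[L]/\EE[\tau]$ contains two kinds of terms.
\begin{itemize}
\item A leading positive term. It comes from clusters consisting of a single unmarked vertex $w$ with all $d$ children marked. From $w$ the walk moves forward with probability $d\beta/(1+d\beta)$, and increasing $\beta$ strictly increases this and reduces wasted steps. We will compute this contribution explicitly. It should be of order $(1-\alpha)$ times an explicit function of $\beta$ bounded below on compact sets, with suitable behaviour as $\beta\to0$ and $\beta\to\infty$.
\item Error terms from clusters of size $k\ge 2$, which are the configurations where monotonicity might fail, as in the unconditioned model. For a cluster of size $k$, we bound the derivative of the hitting and return quantities with respect to $\beta$ by $C k^2$ or similar, uniformly in $\beta$ after normalisation. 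This uses a crude bound: within a finite cluster, the walk's time and the exit probability have $\beta$-derivatives controlled by the number of vertices. The probability of such a cluster is at most $(d(1-\alpha))^{k-1}(1-\alpha)$ times a Catalan-type count, roughly $(1-\alpha)(4\mathrm e d(1-\alpha))^{k-1}$ after a Stirling estimate. This is where the factor $\mathrm e$ in the hypothesis should come from.
\end{itemize}
Summing the error terms then gives a geometric series dominated by the leading term whenever $72\mathrm e d(1-\alpha)<1$.

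The main obstacle is making the comparison uniform in $\beta\in(0,\infty)$. As $\beta\to 0$ the walk in an unmarked vertex almost always steps back, and as $\beta\to\infty$ it almost always goes forward. In both limits the leading positive derivative degenerates, so the error terms must degenerate at least as fast. I would handle this by normalising. I would write all derivatives in the variable $\beta$ multiplied by a common factor such as $(1+d\beta)^2$ or $\beta(1+\beta)$. Then I would verify that each cluster contribution is bounded by $k^{2}$ (or $c^k$) times the normalised leading term, by comparing their explicit rational forms in $\beta$. The constant $72$ should emerge from combining the constant in this ratio bound with the cluster-counting bound. I would try first to prove the ratio bound using a coupling in which the walk on a larger cluster is dominated by repeated independent single-vertex trials.
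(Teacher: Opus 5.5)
Your overall picture (a positive leading contribution from the configuration where the first unmarked vertex has only marked children, plus a perturbation from unmarked clusters of size $k\ge 2$ whose probabilities decay geometrically in $d(1-\alpha)$) matches the structure of the paper's proof. However, the estimate that carries the whole argument is not supplied.

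What is needed is a bound on how fast the cluster quantities change with $\beta$: the expected time to reach a marked vertex, and whatever depth or return quantities you keep. The bound must be uniform in $\beta\in(0,\infty)$, depend on a cluster only through its size, and be measured in the variable $p=d\beta/(1+d\beta)$, which is what your normalisation by $(1+d\beta)^2$ amounts to. In that variable the leading term degenerates at neither end; even in $\beta$ it does not degenerate as $\beta\to 0$. So the real obstacle is not degeneration but this sensitivity bound.

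You only assert the bound (``controlled by the number of vertices'') and then propose to verify it shape by shape, comparing explicit rational functions of $\beta$. That cannot work, because there are infinitely many cluster shapes, so a shape-independent estimate is required. A coupling that dominates a large cluster by independent single-vertex trials does not help either: it controls the size of hitting times, not their dependence on $\beta$. As a consequence the threshold $72\mathrm{e}d$ is asserted rather than derived. Unspecified constants would give monotonicity only for $\alpha>1-1/(C\mathrm{e}d)$ with an unknown $C$.

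The paper closes this gap with two ideas.

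First, before $\sigma$ the walk only visits vertices with exactly $d$ children, so $|X_{n\wedge\sigma}|-(n\wedge\sigma)\Delta(\beta)$ is a martingale. Optional stopping gives $\hat{\mathbb E}_\beta[|X_\sigma|]=\Delta(\beta)\hat{\mathbb E}_\beta[\sigma]$, hence $v_\infty(\beta)=(1+(1-\alpha)\Delta(\beta)\hat{\mathbb E}_\beta[\sigma])/(1+(1-\alpha)\hat{\mathbb E}_\beta[\sigma])$. No depth quantity then needs separate control. An elementary computation reduces monotonicity to the Lipschitz bound $|\hat{\mathbb E}_{\beta_1}[\sigma]-\hat{\mathbb E}_{\beta_2}[\sigma]|\le\hat p_2-\hat p_1$, where $\hat p_i=d\beta_i/(d\beta_i+1)$.

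Second, that Lipschitz bound comes from two ingredients. One is a coupling of the walks with biases $\beta_1<\beta_2$ on the same tree through common uniforms, so that they separate with probability $\hat p_2-\hat p_1$ per step. The other is the $\beta$-uniform exit bound $E^v_{T,\beta}[\eta(A^c)]\le 2|A|^2$ of Lemma \ref{lem:moment-finitesetescape}, proved by a submartingale argument for $\beta\ge 1$ and an electrical-network bound for $\beta\le 1$. Together they give $|\hat{\mathbb E}_1[\sigma]-\hat{\mathbb E}_2[\sigma]|\le 4(\hat p_2-\hat p_1)\,\mathrm E[|T_r^d|^4\mathbbm{1}_{\{|T_r^d|>1\}}]$. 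The hitting-time theorem for the $\mathrm{Bin}(d,1-\alpha)$ total progeny, $P(|T_r^d|=m)\le\frac1m(2\mathrm{e}d(1-\alpha))^{m-1}$, then makes $4\mathrm E[\cdots]$ smaller than $1$ as soon as $72\mathrm{e}d(1-\alpha)<1$.

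If you want to stay with derivatives, the same input suffices. For $h(x)=E^x[\sigma]$ on a cluster $A$, $\partial_p h$ solves the same Poisson equation with source $\bar h(\text{children})-h(\text{parent})$. Hence $|\partial_p h(r)|\le E^r[\sigma]\max_A h\le 4|A|^4$, uniformly in $\beta$.
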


The remainder of this paper is structured as follows. In Section \ref{sec:infinitespeed}, we give more details on the speed in our new model and also prove its differentiability. Section \ref{sec:convergence} is devoted to the proof of the convergence in Theorem \ref{thm:speedconvergence} and Section \ref{sec:diffconvergence} proves the convergence of $v_N'$ in Theorem \ref{thm:derivativeconvergence}. The monotonicty in Theorem \ref{thm:speedmonotone} is proven in Section \ref{sec:monotonicity}. The proofs of the main theorems rely on moment bounds for different kinds of regeneration times, uniform in the bias or in $N$, which are collected and proven in Section \ref{sec:moments}.

\section{On the speed with infinitely many children}
\label{sec:infinitespeed}

We write $M$ for the set of all marked vertices. Define the hitting time of the set $M$ as
\begin{align} \label{eq:hittingsigma}
\sigma = \sigma_1= \inf\{n\in \mathbb N : X_n \in M \}. 
\end{align}
This time is finite $\bbP_\beta$-almost surely for any $\beta>0$, since with probability 1, the random walk visits infinitely many distinct vertices, and each newly visited vertex has probability $\alpha$ to be a marked vertex.  
We set recursively for $k\geq 1$
\begin{align} \label{eq:hittingsigma2}
\sigma_{k+1} = \inf\{n>\sigma_k: X_n \in M \} . 
\end{align}
At each of the times $\sigma_k$, the random walk starts by construction an excursion into a new subtree it has never seen before. Therefore, the increments between hitting times and generations define a sequence
\begin{align*}
\big( |X_{\sigma_{k+1}}| - |X_{\sigma_k}|, \sigma_{k+1}-\sigma_k\big)_{k\geq 1}
\end{align*}
of random variables, which are independent and identically distributed under $\bbP_\beta$. The increments until time $\sigma_1$ have a different distribution, since the root is not guaranteed to be a marked vertex, whereas $X_{\sigma_k}\in M$ by construction for any $k\geq 1$. Define a new environment law $\mathrm{P}^*$ under which we condition the root to be marked, the corresponding annealed law is then denoted by $\bbP^*_\beta$. Note that when the root is marked the tree is infinite a.s., so that this conditioning becomes unnecessary. We then have that the distribution of $( |X_{\sigma_{k+1}}| - |X_{\sigma_k}|, \sigma_{k+1}-\sigma_k )$ under $\bbP_\beta$ is equal to the distribution of $( |X_{\sigma_1}|, \sigma_{1} )=( |X_\sigma|, \sigma ) $ under $\bbP_\beta^*$. 
If $\bbE_\beta[\sigma_2-\sigma_1]=\bbE_\beta^*[\sigma] = \infty$, we have almost surely under $\bbP_\beta$ and under $\bbP_\beta^*$
\begin{align*}
v_\infty(\beta) := \lim_{n\to \infty} \frac{|X_n|}{n} = 0  
\end{align*}
as in \cite[Proposition 4.1]{LyoPemPer96b}. 
If
\begin{align*}
\bbE_\beta[\sigma_2-\sigma_1] = \bbE_\beta^*[\sigma] < \infty ,
\end{align*}
the following convergence holds $\bbP_\beta$-almost surely and also $\bbP_\beta^*$-almost surely:
\begin{align} \label{eq:speedformula}
v_\infty(\beta) := \lim_{n\to \infty} \frac{|X_n|}{n} = \frac{\bbE_\beta[|X_{\sigma_{2}}| - |X_{\sigma_1}|]}{\bbE_\beta[\sigma_2-\sigma_1]} = \frac{\bbE_{\beta}^*[|X_\sigma|]}{\bbE_{\beta}^*[\sigma]} ,
\end{align}
see the proof of Theorem 3.1 in \cite{LyoPemPer96b}. This proves the almost sure convergence in Theorem \ref{thm:speedinfinite}.

Let us remark that if $\beta=0$ and the root is marked, $\sigma=1$ if the first visited vertex after the root is a marked vertex, and $\sigma=2$ otherwise. In the first case (which occurs with probability $\alpha$), we have $|X_\sigma|=1$, in the second case we have $|X_\sigma|=0$. This implies that $\bbP_0^*$-almost surely
\begin{align} \label{eq:speedformulain0}
v_\infty(0):=  \lim_{n\to \infty} \frac{|X_n|}{n} = \frac{\bbE_{0}^*[|X_\sigma|]}{\bbE_{0}^*[\sigma]}= \frac{\alpha}{2-\alpha} ,
\end{align}  
which extends $v_\infty$ to $[0,\infty)$.  
The following lemma completes the proof of Theorem \ref{thm:speedinfinite}.

\begin{lem}\label{lem:criticalvalue}
If $p_0\in \{0,1\}$, then $\bbE_{\beta}^*[\sigma]<\infty$ for all $\beta\geq 0$. If $p_0\in (0,1)$, then $\bbE_{\beta}^*[\sigma]<\infty$ if and only if $\beta <\beta_c=((1-\alpha)f'(q_\infty))^{-1}$, where $q_\infty$ is the unique solution to $q_\infty = (1-\alpha)f(q_\infty)$. 
\end{lem}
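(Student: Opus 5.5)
Under $\bbP^*_\beta$ the root is marked, so the walk first steps to a uniformly fresh child $v_1$. The time $\sigma$ is then determined by the excursion of the walk in the subtree $T_{v_1}$ below $v_1$, together with returns to the root. A return to the root is itself a hit of $M$, since the root is marked. So $\sigma$ is at most the time needed for the walk started at $v_1$ to either hit a marked vertex inside $T_{v_1}$ or return to $\rho$. The strategy is to decompose $T_{v_1}$ into the connected component $C$ of unmarked vertices containing $v_1$ (if $v_1$ is unmarked) and its marked ``boundary'' vertices. Up to time $\sigma$ the walk is a $\beta$-biased walk on the finite or infinite tree $C\cup\{\rho\}\cup \partial C$, killed on hitting $\{\rho\}\cup\partial C$. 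Here $C$ is a Galton–Watson tree in which each vertex is unmarked with probability $1-\alpha$ and then has $\nu$-many children, and marked children act as absorbing leaves. Its generating function structure is $\tilde f(s)=\alpha+(1-\alpha)f(s)$ on the full tree, and the unmarked part has subcritical offspring mean $(1-\alpha)m(\nu)$ only if this is less than $1$; in general it need not be.

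The key identity is the standard Green function expression: $E_{T}[\sigma]$ equals the sum over $x\in C$ of the expected number of visits to $x$ before absorption. Equivalently, $E_T[\sigma]$ is $1$ plus twice the sum of the conductances on the edges of $C$, divided by the effective conductance. The conductances are $\beta^{|x|}$, and the effective conductance is from $v_1$ to $\{\rho\}\cup\partial C$. Since $\rho$ itself is absorbing and adjacent to $v_1$, the escape probability from $v_1$ is bounded below by $1/(1+d(v_1)\beta)$. This gives $E_T[\sigma]\le C(1+d(v_1)\beta)\sum_{x\in C}\beta^{|x|-1}\cdot(\text{bounded factor})$. Hence finiteness of $\bbE^*_\beta[\sigma]$ reduces to finiteness of $\mathrm{E}[\sum_{x\in C}\beta^{|x|}]$, up to an integrability check on $d(v_1)$ that uses $m(\nu)<\infty$. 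A matching lower bound holds because each $x\in C$ is visited with probability comparable to $\beta^{|x|}$ times an escape-type factor bounded below. The main obstacle is that $C$ is an infinite unmarked subtree. On that event, however, the walk still hits a marked vertex or the root, unless it escapes to infinity within $C$. That scenario has probability zero or contributes an infinite time, so it must be controlled.

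Now compute: $\mathrm{E}[\sum_{x\in C}\beta^{|x|}]=\sum_n ((1-\alpha)m(\nu)\beta)^n$. One must also account for the conditioning on survival in the original tree. Here $C$ hangs off a marked root, so no conditioning is needed for the tree itself. For the walk, however, the subtrees must be handled correctly: the relevant subtrees where the walk actually spends time when $p_0\in(0,1)$ are the traps. This leads to the Harris-type decomposition. Conditioning the unmarked cluster to be finite (extinct, with probability $q_\infty$ solving $q=(1-\alpha)f(q)$, since unmarked leaves make the cluster die out) gives a subcritical tree with mean $(1-\alpha)f'(q_\infty)$. The infinite-line-of-descent part is traversed transiently, with marked vertices appearing at geometric distances. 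For $p_0=0$, we have $q_\infty=0$ and $f'(0)=p_1$; the finite traps vanish, and the cluster reaches a marked vertex after geometric time, giving finiteness for all $\beta$. For $p_0=1$, $C$ is trivial. For $p_0\in(0,1)$, the expected time in traps is $\sum_n((1-\alpha)f'(q_\infty)\beta)^n$, which is finite if and only if $\beta<\beta_c$. The lower bound uses that each trap vertex at depth $n$ is reached with probability of order $\beta^n$ relative to its parent's visits. The hardest step is making this trap decomposition rigorous, in particular showing that the time spent along the backbone of $C$ has finite mean uniformly. For that I would first try a comparison with the $N$-independent geometric number of backbone steps before a marked vertex.
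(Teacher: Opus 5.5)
Your proposal has a genuine gap. The direction ``$\bbE^*_\beta[\sigma]<\infty$'' is never established, and the reduction in your second paragraph is false. The Green-function formula gives only an upper bound. With $B=\{\rho\}\cup\partial C$, the expected number of visits to $x\in C$ before absorption is $\pi(x)\,P_x(\tau_{v_1}<\tau_B)\,R_{\mathrm{eff}}(v_1\leftrightarrow B)$. The factor $P_x(\tau_{v_1}<\tau_B)$ is not bounded below: for $\beta>1$, from a deep $x$ the walk is typically absorbed by marked vertices below $x$ before it returns. So there is no matching lower bound, and finiteness does not reduce to $\mathrm{E}[\sum_{x\in C}\beta^{|x|}]<\infty$.

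For instance, if $\nu=\delta_1$, then $C$ is a path of $L$ unmarked vertices with $\mathrm{P}(L\ge n)=(1-\alpha)^n$. Lemma~\ref{lem:moment-finitesetescape} gives $\bbE^*_\beta[\sigma]\le 1+2\,\mathrm{E}[L^2]<\infty$ for every $\beta$, whereas $\mathrm{E}[\sum_{x\in C}\beta^{|x|}]=\infty$ once $\beta\ge(1-\alpha)^{-1}$. Your second paragraph therefore yields finiteness only for $\beta<((1-\alpha)m(\nu))^{-1}$. That is the wrong range for $p_0=0$. It is also too small for $p_0\in(0,1)$ whenever $\nu$ charges $\{2,3,\dots\}$, because then $f'(q_\infty)<m(\nu)$.

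Separately, the traps are the finite subtrees containing no marked vertex, whose probability is $q_\infty=(1-\alpha)f(q_\infty)$. The event that the unmarked cluster is finite is different: its probability is the smallest fixed point of $\alpha+(1-\alpha)f$, and such a cluster is not a trap, since its marked boundary absorbs the walk. With that reading, your trap computation for $p_0\in(0,1)$ is exactly the paper's: conditioned generating function $h(s)=(1-\alpha)f(q_\infty s)/q_\infty$, mean return time $2+2\sum_i(\beta(1-\alpha)f'(q_\infty))^i$, and the lower bound from entering a single trap with positive probability.

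The missing piece is control of the time spent before $\sigma$ outside the traps; for $p_0=0$ this is the whole statement. A ``geometric number of backbone steps'' is not available, because the walk may revisit backbone vertices arbitrarily often. What is geometric is the number of \emph{distinct} vertices discovered before $\sigma$. If you reveal the tree along the walk, each newly discovered vertex is marked with probability $\alpha$ independently of the past. So this number is dominated by a Geometric$(\alpha)$ variable $\hat G$ that does not depend on $\beta$.

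You then need a bound on the time between consecutive discoveries that is uniform in $\beta$. Lemma~\ref{lem:moment-finitesetescape} provides it: in a leafless tree, the exit time from a connected set $A$ has mean at most $2|A|^2$ for every $\beta$. The proof uses the submartingale $|X_n|^2-n$ for $\beta\ge 1$, and for $\beta\le1$ a return-time bound on the tree with $A^c$ removed. This gives $\bbE^*_\beta[\sigma]\le 1+\mathrm{E}\bigl[\sum_{k=1}^{\hat G}2k^2\bigr]<\infty$, which is Lemma~\ref{lem:moment-uniform2nd}. The argument uses $p_0=0$ essentially, since a finite set containing a deep dead end has exit time of order $\beta^{\mathrm{depth}}$.

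For $p_0\in(0,1)$ and $\beta<\beta_c$, the paper does not redo the backbone analysis. It invokes the trap reduction of \cite[Theorem 4.1]{LyoPemPer96b}: $\bbE^*_\beta[\sigma]<\infty$ if and only if the averaged mean return time in a trap is finite. Your proposal needs either that citation or an actual proof combining the discovery bound on the backbone with control of the number and duration of trap excursions.
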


\begin{proof} 
For $p_0=1$, every visit to an unmarked vertex is directly followed by returning to the parent vertex and $\sigma$ may be dominated by a geometric random variable, multiplied by 2. If $p_0=0$, we may use the estimate in Lemma \ref{lem:moment-uniform2nd}, which implies that $\bbE_{\beta}[\sigma]$ is finite. 

Now suppose $p_0\in (0,1)$. We let $T_0$ be the subtree of one descendant $v$ (the first, say) of the root, with the additional edge connecting it to $\rho$. Let $q_\infty$ be the extinction probability of $T_0$. The probability that $T_0$ is finite is the unique fixed point in $(0,1)$ of $(1-\alpha)f$, by the usual extinction argument for Galton-Watson trees, with the offspring probability $p_k$ replaced by $(1-\alpha)p_k$. Also, note that we do not have to specify choosing the smallest fixed point, since 1 is not a fixed point of $(1-\alpha)f$, so that $q_\infty$ is uniquely determined by $q_\infty = (1-\alpha)f(q_\infty)$. \cite[Theorem 4.1]{LyoPemPer96b} shows that  $\bbE_{\beta}[\sigma]<\infty$ if and only if the mean return time to $\rho$ of a random walk in $T_0$ is finite, averaged over all realizations of $T_0$ conditioned to be finite. The tree $T_0$ is finite if and only if $v$ is not marked and all subtrees at children of $v$ are finite. Therefore, the generating function of the offspring law of $T_0$, conditioned on $T_0$ being finite, is given by
\begin{align*}
h(s) = \frac{(1-\alpha)f(q_\infty s)}{q_\infty} .
\end{align*} 
Mean hitting times can be computed in terms of the conductances of the edges, we refer to \cite{doyle1984random} and \cite{LyoPer16} for an overview on such electrical network arguments. For the random walk with transition probabilities \eqref{eq:transitionprob}, the conductance of the edge between generation $n$ and generation $n+1$ is given by $\beta^n$. The mean return time to $\rho$ is given by the inverse of the stationary measure evaluated at $\rho$, which may be computed by summing over all vertices the sum of adjacent edge weights. Writing $G_i(v)$ for the number of descendants of $v$ at distance $i$, it is given by 
\begin{align} \label{eq:hittingtimeformula}
2+2 \sum_{i=1}^\infty \beta^{i} \mathrm{E}[ G_i(v) ] & = 2+2 \sum_{i=1}^\infty  \beta^{i} h'(1)^{i} \\ \notag 
& = 2+2 \sum_{i=1}^\infty (\beta(1-\alpha)f'(q_\infty))^{i} .
\end{align}
This sum is finite if and only if $\beta<1/(1-\alpha)f'(q_\infty)$. 
\end{proof}

Next, we give a formula for the derivative of $v_\infty$ on $[0,\infty)$, when $v_\infty(0)$ is defined by \eqref{eq:speedformulain0}. In the following, we write $P_{T,\beta}^v$ for the law of a biased random walk starting at $v$ and $E_{T,\beta}^v$ for the corresponding expectation. We denote by 
\begin{align*}
\Delta_\beta(v)=E_{T,\beta}^v[|X_1|-|v|] 
\end{align*} 
the local bias in vertex $v$. If $v$ is a marked vertex, $\Delta_\beta(v)=1$ for all $\beta\geq 0$.

\begin{thm}\label{thm:infinitespeeddiff}
Suppose $p_0=0$ and the second moment of $\nu$ is finite. Then the speed $v_\infty$ is differentiable on $(0,\infty)$ and its derivative is given by 
\begin{align*}
v_\infty'(\beta) = \frac{1}{2\beta \bbE_\beta^*[\sigma]} \bbE_\beta^*\left[ \left( |X_\sigma| - \sigma v_\infty(\beta)\right)\left( |X_\sigma| - \sum_{k=0}^{\sigma-1} \Delta_\beta(X_k) \right)\right] ,
\end{align*}
for $\beta>0$. The right-hand derivative in 0 is given by
\begin{align*}
v_\infty'(0) = 2 \frac{\alpha (1-\alpha)}{(2-\alpha)^2} m(\nu) .
\end{align*}
\end{thm}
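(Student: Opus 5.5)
Since $v_\infty(\beta)=\bbE_\beta^*[|X_\sigma|]/\bbE_\beta^*[\sigma]$ by \eqref{eq:speedformula}, I will differentiate numerator and denominator separately and combine with the quotient rule. The derivative of the quenched law of a path of length $\sigma$ is computed with the likelihood-ratio (Girsanov) method. Under $P_{T,\beta}$, a step from an unmarked vertex $v\ne\rho$ to a child has probability $\beta/(1+d(v)\beta)$ and a step to the parent has probability $1/(1+d(v)\beta)$. Hence
\begin{align*}
\frac{\partial}{\partial\beta}\log P_{T,\beta}(X_{k+1}\mid X_k) = \frac{\mathbf 1\{|X_{k+1}|>|X_k|\}}{\beta} - \frac{d(X_k)}{1+d(X_k)\beta} = \frac{1}{2\beta}\Big(|X_{k+1}|-|X_k| - \Delta_\beta(X_k)\Big).
\end{align*}
The second equality uses $\Delta_\beta(v)=(d(v)\beta-1)/(1+d(v)\beta)$. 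At marked vertices and at the root the transition does not depend on $\beta$, and the right-hand side vanishes there because $\Delta_\beta=1$ and the step is deterministically upward. Summing up to $\sigma$ gives the score
\begin{align*}
S_\sigma=\frac{1}{2\beta}\Big(|X_\sigma|-\sum_{k=0}^{\sigma-1}\Delta_\beta(X_k)\Big),
\end{align*}
using $|X_0|=0$ under $\bbP^*_\beta$. Formally, $\partial_\beta \bbE^*_\beta[F]=\bbE^*_\beta[F S_\sigma]$ for $F=|X_\sigma|$ and $F=\sigma$. The quotient rule then gives
\begin{align*}
v_\infty'=\frac{\bbE^*_\beta[(|X_\sigma|-v_\infty\sigma)S_\sigma]}{\bbE^*_\beta[\sigma]},
\end{align*}
which is the claimed formula.

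The rigorous justification proceeds as follows. First, truncate by writing $\bbE^*_\beta[F]=\sum_n \bbE^*_\beta[F\mathbf 1\{\sigma=n\}]$. For fixed $n$ the event depends on finitely many steps, and the tree enters only through the degrees along the path, which are integrable. So each term is differentiable, with derivative $\bbE^*_\beta[F S_n\mathbf 1\{\sigma=n\}]$. Second, to interchange sum and derivative I need local uniformity in $\beta\in[\beta_0,\beta_1]\subset(0,\infty)$ of
\begin{align*}
\sum_n \sup_\beta \big|\partial_\beta \bbE^*_\beta[F\mathbf 1\{\sigma=n\}]\big|.
\end{align*}
Since $|X_\sigma|\le\sigma$ and $|S_\sigma|\le \sigma/\beta$ (each increment of the sum is bounded by $2$), it suffices to bound $\bbE^*_\beta[\sigma^2]$ uniformly on compacts. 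This bound should follow from the moment estimates of Section~\ref{sec:moments}, specifically Lemma~\ref{lem:moment-uniform2nd}, which was already used in Lemma~\ref{lem:criticalvalue} for $p_0=0$. A cleaner route is to bound $\bbE^*_\beta[\sigma^2\sup_{\beta'}|{\rm likelihood\ ratio}|]$ by dominated convergence on difference quotients. The finite second moment of $\nu$ is needed for this step, because the degrees appear in the score and in the tail of $\sigma$. Continuity of $\beta\mapsto\bbE^*_\beta[\sigma]>0$ follows from the same domination. \textbf{The main obstacle} is obtaining this uniform second-moment bound on $\sigma$ together with the domination of the likelihood ratio over a neighbourhood of $\beta$. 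I would rely on the uniform lemma for this.

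For the right derivative at $0$, I compute $\bbE^*_\beta[|X_\sigma|]$ and $\bbE^*_\beta[\sigma]$ to first order in $\beta$ directly. Starting at a marked root, the walk steps to a child $w$. With probability $\alpha$, $w$ is marked, so $\sigma=1$ and $|X_\sigma|=1$. Otherwise $w$ is unmarked with $d(w)$ children. It steps up with probability $d\beta/(1+d\beta)=d\beta+O(\beta^2)$, reaching a marked vertex at level $2$ with probability $\alpha$. Paths with two or more upward steps from unmarked vertices cost $O(\beta^2)$, given moment bounds. Otherwise it returns to $\rho$, so $\sigma=2$ and $|X_\sigma|=0$.

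To first order, the event of order $\beta$ is: $w$ unmarked, one step up to a marked grandchild ($\sigma=2$, $|X_\sigma|=2$), or to an unmarked grandchild followed by two steps down ($\sigma=4$, $|X_\sigma|=0$). This yields
\begin{align*}
\bbE^*_\beta[|X_\sigma|]&=\alpha+2(1-\alpha)\alpha m\beta+O(\beta^2),\\
\bbE^*_\beta[\sigma]&=2-\alpha+2(1-\alpha)^2m\beta+O(\beta^2).
\end{align*}
The quotient rule gives
\begin{align*}
v_\infty'(0)=\frac{2(1-\alpha)\alpha m(2-\alpha)-2\alpha(1-\alpha)^2m}{(2-\alpha)^2}=\frac{2\alpha(1-\alpha)m}{(2-\alpha)^2},
\end{align*}
as claimed. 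The $O(\beta^2)$ error control again uses the uniform moment bound on $\sigma$ near $\beta=0$ and the second moment of $\nu$.
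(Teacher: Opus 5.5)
Your proposal is correct and follows essentially the paper's route: the same per-step identity $\frac{\mathrm d}{\mathrm d\beta}p_{T,\beta}(v,w)=\frac{1}{2\beta}\left(|w|-|v|-\Delta_\beta(v)\right)p_{T,\beta}(v,w)$ summed along paths with $\sigma=m$, the interchange of derivative, series and environment average via the uniform moment bounds of Lemma~\ref{lem:moment-uniform2nd}, the quotient rule, and at $\beta=0$ the same four first-order path events. Two small corrections. First, the second moment of $\nu$ is needed only at $\beta=0$: for $\beta>0$ the score is bounded by $\sigma/\beta$ because $|\Delta_\beta|\le 1$, so degrees play no role there. Second, at $\beta=0$ the remainder from paths with two upward steps is only shown to be $o(\beta)$, not $O(\beta^2)$; the paper gets $O(\beta^{4/3})$ by H\"older from the $O(\beta^2)$ probability of that event and a uniform third moment of $\sigma$, and $o(\beta)$ is all the argument needs.
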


\begin{remark}\label{rem:einstein}
One may ask about the validity of an ``Einstein relation'' type of equation in $\beta=0$, which states that the mobility of a particle, i.e., the derivative of $v_\infty$ in 0, equals the diffusivity $\sigma^2_\infty(0)$, i.e., the asymptotic variance in a CLT under $\bbP_0^*$. In our new model however, $\beta=0$ is not a classical equilibrium point, the speed $v_\infty(0)$ is positive and $v_\infty'(0)$ does not agree with the standard notion of mobility.   
To compare the diffusivity with the derivative at zero, we use that under $\bbP_0^*$, the hitting time $\sigma$ is 1 or 2 and the variance $\sigma^2_\infty(0)$ may be computed explicitly as
\begin{align*}
\sigma^2_\infty(0)= \frac{1}{\bbE_0^*[\sigma]} \bbE_0^*\left[ \left( |X_\sigma| - \sigma v_\infty(\beta)\right)^2\right] = 4\frac{\alpha(1-\alpha)}{(2-\alpha)^3} .
\end{align*}
In particular, it is independent of $m(\nu)$ and in general does not agree with the derivative of $v_\infty$ at zero. 
\end{remark}

\noindent\textit{Proof of Theorem \ref{thm:infinitespeeddiff}.}
We differentiate numerator and denominator of the speed formula in \eqref{eq:speedformula} and first consider $\beta>0$. The numerator is 
\begin{align*}
\bbE^*_\beta[|X_\sigma|] = \mathrm{E}^*\big[ E_{T,\beta}[|X_\sigma|]\big]  = \mathrm{E}^*\left[ \sum_{m=1}^\infty E_{T,\beta}[|X_\sigma|\mathbbm{1}_{\{\sigma=m\}}] \right] . 
\end{align*} 
For a fixed tree $T$, the inner expectation can be written as 
\begin{align} \label{eq:diffinner}
E_{T,\beta}[|X_\sigma|\mathbbm{1}_{\{\sigma=m\}}] 
= \sum_z |z_m| P_{T,\beta}(X_0=z_0, X_1=z_1,\dots , X_m=z_m) , 
\end{align} 
where the sum is over all nearest neighbor paths $z=(z_0,\dots ,z_m)$ in $T$ with $z_0=\rho$, $z_1$ is the first descendant of the marked vertex $z_0$, and $z_m$ is the first marked vertex along this path after $z_0$. Write $p_{T,\beta}(v,w)$ for the transition probability from vertex $v$ to vertex $w$. If $v\neq \rho$ and $w$ is a descendant of $v$,
\begin{align}
\frac{\mathrm d}{\mathrm  d\beta} p_{T,\beta}(v,w) & = \frac{\mathrm d}{\mathrm d\beta} \frac{\beta}{d(v)\beta +1} = \frac{1}{(d(v)\beta+1)^2}
= \frac{1}{\beta} \frac{1}{d(v)\beta+1} \frac{\beta}{d(v)\beta+1} \notag \\
& = \frac{1}{2\beta}\left( 1- \frac{d(v)\beta-1}{d(v)\beta+1}\right) p_{T,\beta}(v,w) \label{eq:difftransition1}.
\end{align}  
If $v\neq \rho$ and $w$ is the parent vertex of $v$,
\begin{align}
\frac{\mathrm d}{\mathrm  d\beta} p_{T,\beta}(v,w) & = \frac{\mathrm d}{\mathrm d\beta} \frac{1}{d(v)\beta +1} = \frac{-d(v)}{(d(v)\beta+1)^2}
= \frac{1}{\beta} \frac{-d(v)\beta}{d(v)\beta+1} \frac{1}{d(v)\beta+1} \notag \\
& = \frac{1}{2\beta}\left( -1- \frac{d(v)\beta-1}{d(v)\beta+1}\right) p_{T,\beta}(v,w) \label{eq:difftransition2} .
\end{align}  
Finally, if $v=\rho$, then $w$ is the first descendant of $\rho$ and 
\begin{align}
\frac{\mathrm d}{\mathrm  d\beta} p_{T,\beta}(v,w)  = 0 = \frac{1}{2\beta}\left( 1- 1\right) p_{T,\beta}(v,w) \label{eq:difftransition3} .
\end{align}  
In any of the cases \eqref{eq:difftransition1}, \eqref{eq:difftransition2}, \eqref{eq:difftransition3}, we see that
\begin{align}
\frac{\mathrm d}{\mathrm  d\beta} p_{T,\beta}(v,w)  = \frac{1}{2\beta}\left( |w|-|v|- \Delta_\beta(v)\right) p_{T,\beta}(v,w) \label{eq:difftransitionfinal} ,
\end{align}  
with $\Delta_\beta(v)$ the local bias at vertex $v$. 
Returning to \eqref{eq:diffinner}, this implies
\begin{align}
\frac{\mathrm d}{\mathrm  d\beta} E_{T,\beta}[|X_\sigma|\mathbbm{1}_{\{\sigma=m\}}] 
& = \sum_z |z_m| \frac{\mathrm d}{\mathrm  d\beta} \prod_{i=0}^{m-1} p_{T,\beta}(z_i,z_{i+1}) \notag  \\
& = \sum_z |z_m| \frac{1}{2\beta}\left( \sum_{i=0}^{m-1} |z_{i+1}|-|z_i| - \Delta_{\beta}(z_i) \right) \prod_{i=0}^{m-1} p_{T,\beta}(z_i,z_{i+1}) \notag \\
& = \sum_z |z_m| \frac{1}{2\beta}\left(  |z_{m}| - \sum_{i=0}^{m-1}\Delta_{\beta}(z_i) \right) \prod_{i=0}^{m-1} p_{T,\beta}(z_i,z_{i+1}) \label{eq:diffinner2}. 
\end{align}
Note that the sum is over finitely many paths so that we may differentiate inside the sum. The final sum \eqref{eq:diffinner2} can be written as an expectation again and we obtain
\begin{align}
\frac{\mathrm d}{\mathrm  d\beta} E_{T,\beta}[|X_\sigma|\mathbbm{1}_{\{\sigma=m\}}] 
= \frac{1}{2\beta} E_{T,\beta}\left[ |X_\sigma|\left(  |X_\sigma|-  \sum_{i=0}^{\sigma-1}\Delta_{\beta}(X_i) \right) \mathbbm{1}_{\{\sigma=m\}}\right] .
\end{align}
The last expectation can by bounded by $\frac{1}{2\beta} E_{T,\beta}[\sigma^2 \mathbbm{1}_{\{\sigma=m\}}]$, which is summable over $m$ locally uniformly in $\beta>0$, since by Lemma \ref{lem:moment-uniform2nd}, 
\begin{align*}
\sup_{\beta\geq 0} E_{T,\beta}[\sigma^2]<C_T
\end{align*}
with a constant $C_T$ finite for almost $\mathrm{P}^*$-almost every tree $T$. This shows that 
\begin{align}
\frac{\mathrm d}{\mathrm  d\beta} \sum_{m=1}^\infty E_{T,\beta}[|X_\sigma|\mathbbm{1}_{\{\sigma=m\}}] 
= \sum_{m=1}^\infty \frac{\mathrm d}{\mathrm  d\beta}  E_{T,\beta}[|X_\sigma|\mathbbm{1}_{\{\sigma=m\}}] ,
\end{align}
and therefore 
\begin{align*}
\frac{\mathrm d}{\mathrm  d\beta}  E_{T,\beta}[|X_\sigma|] = \frac{1}{2\beta} E_{T,\beta}\left[ |X_\sigma|\left(  |X_\sigma|-  \sum_{i=0}^{\sigma-1}\Delta_{\beta}(X_i) \right)\right] .
\end{align*}
In the following, let us write $Y_\sigma =\frac{1}{2\beta}( |X_\sigma|-  \sum_{i=0}^{\sigma-1}\Delta_{\beta}(X_i))$. The expectation $E_{T,\beta}[|X_\sigma|Y_\sigma]$ is bounded by $2 E_{T,\beta}[\sigma^2]$ and using again the uniform moment bound of Lemma \ref{lem:moment-uniform2nd}, we may interchange differentiating with respect to $\beta$ and integrating with respect to the environment, so that
\begin{align*}
\frac{\mathrm d}{\mathrm  d\beta}  \bbE_{\beta}^*[|X_\sigma|]
= \frac{\mathrm d}{\mathrm  d\beta} \mathrm{E}^* \left[  E_{T,\beta}[|X_\sigma|]\right]
=  \mathrm{E}^* \left[ \frac{\mathrm d}{\mathrm  d\beta} E_{T,\beta}[|X_\sigma|]\right]
= \bbE_\beta^* \left[ |X_\sigma|\,Y_\sigma \right] 
\end{align*}
By analogous arguments,
\begin{align*}
\frac{\mathrm d}{\mathrm  d\beta}  \bbE_{\beta}^*[ \sigma ]
= \bbE_\beta^* \left[ \sigma\, Y_\sigma\right] .
\end{align*}
For the derivative of the speed this implies
\begin{align*}
\frac{\mathrm d}{\mathrm  d\beta} v_\infty(\beta)&  = \frac{\mathrm d}{\mathrm  d\beta} \frac{\bbE_{\beta}^*[ |X_\sigma| ]}{\bbE^*_{\beta}[ \sigma ]} 
= \frac{\bbE^*_{\beta}[ |X_\sigma|\,Y_\sigma ]\bbE^*_{\beta}[ \sigma ]- \bbE^*_{\beta}[ |X_\sigma| ]\bbE^*_{\beta}[ \sigma\, Y_\sigma ]}{\bbE^*_{\beta}[ \sigma ]^2} \\
& = \frac{\bbE^*_{\beta}[ |X_\sigma|\,Y_\sigma ]- v_\infty(\beta) \bbE^*_{\beta}[ \sigma\, Y_\sigma ]}{\bbE^*_{\beta}[ \sigma ]} \\
& = \frac{1}{\bbE^*_\beta[\sigma]}\bbE^*_\beta \left[ \big( |X_\sigma| -v_\infty(\beta) \sigma \big) Y_\sigma \right] ,
\end{align*}
which is the claimed derivative for $\beta>0$. 

For the derivative in $\beta=0$, we compute $\bbE^*_\beta[|X_\sigma|]$ and $\bbE^*_\beta[\sigma]$ up to order $\beta$. Under $\bbP^*_\beta$, the random walk starts at a marked root and the first step is always to the first descendant of the root. Recall that $M$ is the set of all marked vertices. We claim that only four types of path events need to be considered:
\begin{align*}
A_1 & = \{ X_1 \in M\}  \\
A_2 & = \{ X_1 \notin M, X_2=\rho\}  \\
A_3 & = \{ X_1 \notin M, X_2\in M, X_2\neq \rho \}  \\
A_4 & = \{ X_1, X_2\notin M, X_3=X_1,X_4=\rho\}  
\end{align*} 
Indeed, suppose that none of the events $A_i$ above occur. Then $X_2$ is a descendant of $X_1$, but not a marked vertex, and then either $X_3$ is a descendant of $X_2$ or $X_3=X_1$ and $X_4$ is a descendant of $X_3$. That is, 
\begin{align*}
(A_1\cup A_2\cup A_3 \cup A_4)^c \subset B_1\cup B_2 ,
\end{align*} 
with
\begin{align*}
B_1 & = \{X_1,X_2\notin M, |X_3|=|X_2|+1\} \\
B_2 & = \{X_1,X_2\notin M, X_3=X_1, |X_4|=|X_3|+1 \} .
\end{align*}
We have, denoting by $d_1,d_2$ the degrees of two distinct, non-marked vertices,  
\begin{align*}
\bbP^*_\beta(B_1) = \bbE^*_\beta\left[ (1-\alpha)^2 \frac{d_1\beta}{d_1\beta+1} \frac{d_2\beta}{d_2\beta+1} \right] \leq \beta^2 (1-\alpha)^2 \bbE^*_\beta[d_1]^2 
\end{align*}
and
\begin{align*}
\bbP^*_\beta(B_2) = \bbE^*_\beta\left[ (1-\alpha)^2 \left(\frac{d_1\beta}{d_1\beta+1}\right)^2 \frac{1}{d_2\beta+1} \right] \leq \beta^2 (1-\alpha)^2 \bbE_\beta[d_1^2] .
\end{align*}
Note that $\bbE^*_\beta[d_1]$ and $\bbE^*_\beta[d_1^2]$ are just the first and second moment, respectively, of $\nu$, which are finite by assumption. H\"older's inequality implies 
\begin{align} \label{eq:diffzerorest}
\bbE^*_\beta[|X_\sigma| \mathbbm{1}_{B_1\cup B_2}] \leq \bbE^*_\beta[\sigma \mathbbm{1}_{B_1\cup B_2}]
\leq \bbE^*_\beta(B_1\cup B_2)^{2/3} \bbE^*_\beta [\sigma^3]^{1/3} .
\end{align}
The moment bound of Lemma \ref{lem:moment-uniform2nd} shows that this upper bound is of order $\beta^{4/3}$. For the derivative at zero, it therefore suffices to consider the expectation of $|X_\sigma|$ and of $\sigma$ on one of the events $A_1,\dots ,A_4$. For the position, only on $A_1$ and $A_3$ holds $|X_\sigma |\neq 0$, so that
\begin{align*}
\bbE^*_\beta[|X_\sigma|\mathbbm{1}_{A_1\cup A_2 \cup A_3 \cup A_4}] 
& = \bbE^*_\beta[ |X_\sigma|\mathbbm{1}_{A_1}]
 + \bbE^*_\beta[ |X_\sigma|\mathbbm{1}_{A_3}] \\
& = \alpha + 2(1-\alpha)\alpha \bbE^*_\beta\left[ \frac{d_1\beta}{d_1\beta +1} \right] \\
& = \alpha + 2(1-\alpha)\alpha  \bbE^*_\beta[d_1] \beta + o(\beta) .
\end{align*}
Together with \eqref{eq:diffzerorest}, we obtain
\begin{align}
\frac{\mathrm d}{\mathrm d \beta}\bbE^*_\beta[|X_\sigma|] \big|_{\beta=0} = 2(1-\alpha)\alpha m(\nu) . 
\end{align}
For the expectation of $\sigma$, we have 
\begin{align*}
& \bbE^*_\beta[\sigma \mathbbm{1}_{A_1\cup A_2 \cup A_3 \cup A_4}] \\
& = \alpha + 2(1-\alpha) \bbE^*_\beta\left[ \frac{1}{d_1\beta +1} \right] + 2(1-\alpha)\alpha \bbE^*_\beta\left[ \frac{d_1\beta}{d_1\beta +1} \right] \\
& \qquad + 4 (1-\alpha)^2 \bbE^*_\beta\left[ \frac{d_1\beta}{d_1\beta +1} \frac{1}{d_2\beta +1} \frac{1}{d_1\beta +1}\right] \\
& = 2-\alpha - 2(1-\alpha) \bbE^*_\beta[d_1]\beta  + 2(1-\alpha)\alpha \bbE^*_\beta[d_1]\beta+ 4 (1-\alpha)^2 \bbE^*_\beta[d_1]\beta + o(\beta) ,
\end{align*}
which yields 
\begin{align}
\frac{\mathrm d}{\mathrm d \beta}\bbE^*_\beta[\sigma ] \big|_{\beta=0} = 2(1-\alpha)^2 m(\nu) . 
\end{align}
Finally, the derivative of the speed is given by 
\begin{align}
\frac{\mathrm d}{\mathrm d \beta}v_\infty(\beta) \big|_{\beta=0}
& =
\frac{\mathrm d}{\mathrm d \beta}\frac{\bbE^*_\beta[|X_\sigma |]}{\bbE^*_\beta[\sigma]} \big|_{\beta=0}
= \frac{[2(1-\alpha)\alpha m(\nu)] (2-\alpha) - \alpha [2(1-\alpha)^2m(\nu)]}{(2-\alpha)^2} 
\\
& = \frac{2\alpha (1-\alpha)m(\nu)}{(2-\alpha)^2} ,
\end{align}
which finishes the proof of Theorem \ref{thm:infinitespeeddiff}.
\qed

\section{Convergence of the speed}
\label{sec:convergence}

In this section we prove Theorem \ref{thm:speedconvergence}. 
Let $\beta>0$ and suppose $N$ is sufficiently large, such that $\beta>1/m_N$ and $v_N(\beta)$ is positive. We make use of level regeneration times as studied by \cite{Dembo2002largeDeviations}. They are random times $0<\tau_1<\tau_2<\dots $ characterized by  
\begin{align} \label{eq:defclassicalregtime}
|X_m| < |X_{\tau_k}| \text{ for all }m<\tau_k,\qquad
|X_m| \geq |X_{\tau_k}| \text{ for all } m\geq \tau_k, 
\end{align}
that is, at time $\tau_k$ the random walk reaches a generation it has not seen before and then never visits its parent vertex. In the transient regime, these regeneration times are finite a.s. and
\begin{align*}
\big( |X_{\tau_{k+1}}| - |X_{\tau_k}|, \tau_{k+1}-\tau_k\big)_{k\geq 1}
\end{align*} 
is an i.i.d. sequence under $\mathbb P_\beta^{(N)}$. As for the times $\sigma_k$ defined in Section \ref{sec:infinitespeed}, the increments until time $\tau_1$ have a different distribution. Let us denote by $\bar{\bbP}_\beta^{(N)}$ the annealed law, under which the underlying Galton-Watson tree is an  augmented tree $T^*$, that is, we attach an additional edge with conductance $\beta^{-1}$ to the root $\rho$, connecting it to an artificial parent vertex $\rho^*$, and the random walk is conditioned on never hitting $\rho^*$, that is, on the event 
\begin{align*}
R^c = \{X_n\neq \rho^* \text{ for all } n\geq 0\} . 
\end{align*} 
Then the distribution of $( |X_{\tau_{2}}| - |X_{\tau_1}|, \tau_{2}-\tau_1)$ under $\bbP_\beta^{(N)}$ is equal to the distribution of $(|X_{\tau_1}|,\tau_1)$ under $\bar{\bbP}_\beta^{(N)}$, and the speed of the random walk is given by 
\begin{align}
v_N(\beta) = \frac{\bbE_\beta^{(N)}[|X_{\tau_2}|-|X_{\tau_1}|]}{\bbE_\beta^{(N)}[\tau_2-\tau_1]}
= \frac{\bar{\bbE}_\beta^{(N)}[|X_{\tau_1}| ]}{\bar{\bbE}_\beta^{(N)}[\tau_1 ]} .
\end{align}
The same construction holds under $\bbP_\beta$, such that 
\begin{align}
v_\infty(\beta) = \frac{\bbE_\beta[|X_{\tau_2}|-|X_{\tau_1}|]}{\bbE_\beta[\tau_2-\tau_1]}
= \frac{\bar{\bbE}_\beta[|X_{\tau_1}|]}{\bar{\bbE}_\beta[\tau_1 ]} ,
\end{align}
with the analogous definition of $\bar{\bbP}_\beta$. We make use of these formulas to prove convergence of the speed. 
The convergence of the expectations (here and also later in Section \ref{sec:diffconvergence}) will follow by the following argument: If $Z_N, N\geq 1$ are random variables, such that the distribution of $Z_N$ under $\bar{\bbP}_\beta^{(N)}$ converges weakly to the distribution of a random variable $Z_0$ under $\bar{\bbP}_\beta$, and if 
\begin{align*}
\sup_{N\geq N_0} \bar{\bbE}_\beta^{(N)}[Z_N^r] <\infty
\end{align*}
for some $N_0\in \mathbb N$ and $r>1$, then
\begin{align}\label{eq:expectationconv}
\bar{\bbE}_\beta^{(N)}[Z_N] \xrightarrow[N\to \infty]{} \bar{\bbE}_\beta[Z_0] .
\end{align}
This follows readily by constructing a coupling of all random variables $Z_0,Z_1,\dots $ on a common probability space, such that the uniform bound on the $r$-th moment implies uniform integrability. 

The following lemma shows a convergence in distribution for the regeneration epochs.

\begin{lem}\label{lem:regweakconv}
Let $\beta>0$. For any $m\geq 1$, the distribution of 
\begin{align} 
\big( |X_1|,\dots ,|X_m|,\Delta_\beta(X_0),\dots ,\Delta_\beta (X_{m-1}), \mathbbm{1}_{\{\tau_1 =m \}} \big) 
\end{align}
under $\bar{\bbP}_\beta^{(N)}$ converges as $N\to \infty$ weakly to its distribution under $\bar{\bbP}_\beta$.
\end{lem}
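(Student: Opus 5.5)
The idea is to couple the $N$-tree and the infinite tree so that they agree, with high probability, on everything the event $\{\tau_1=m\}$ and the first $m$ steps depend on. The one nonlocal ingredient, the conditioning on $R^c$ together with $\{\tau_1=m\}$, is handled through an escape probability.

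\textbf{Coupling.} Since $p_0=0$ is assumed in this section, no conditioning on survival is needed, and $\mathrm{P}^{(N)}$ and $\mathrm{P}$ are plain Galton--Watson laws. I would construct both trees from the same i.i.d.\ marks and $\nu$-degrees. In $T_N$, a marked vertex gets children $1,\dots,N$; in $T_\infty$ it gets children $1,2,\dots$. Below a common child the subtrees are identical. The walks are then coupled step by step. At an unmarked vertex $v$ both trees have the same $d(v)$, so the transition probabilities \eqref{eq:transitionprob} agree. At a marked vertex the infinite walk deterministically moves to the first unvisited child. The $N$-walk moves to a parent with probability $1/(N\beta+1)$, and otherwise to a uniform child among $N$; I would relabel the children so that this uniform child is ``the first unvisited one'' whenever it is unvisited.

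Within $m$ steps the walk is at a marked vertex at most $m$ times and has visited at most $m$ of its children. So the coupling fails in the first $m$ steps with probability at most $m\bigl(1/(N\beta+1)+m/N\bigr)\to0$. On the success event, the paths, the degrees of visited vertices, and hence $\Delta_\beta(X_k)$ coincide. For a marked vertex, $\Delta_\beta=(N\beta-1)/(N\beta+1)\to1$, so the finite-dimensional vector converges, except for the regeneration indicator and the conditioning.

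\textbf{Regeneration event and conditioning.} The event $\{\tau_1=m\}\cap R^c$ is not determined by the first $m$ steps. I would write it as the event $F_m$, depending only on $(X_0,\dots,X_m)$, that $|X_k|<|X_m|$ for $k<m$ and that $\rho^*$ was not hit, intersected with the event that after time $m$ the walk never returns to $X_m^*$. By the Markov property at time $m$, the probability of the latter is the quenched escape probability $\esc_\beta(X_m)$ from $X_m$ in its subtree augmented by the edge to $X_m^*$. The joint law of the vector with the indicator under $\bar{\bbP}^{(N)}_\beta$ is then expressed through quantities of the form
\[
\frac{\mathbb{E}^{(N)}_\beta\bigl[g(\text{path}_{m})\,\mathbbm{1}_{F_m}\,\esc^{(N)}_\beta(X_m)\bigr]}{\mathbb{E}^{(N)}[\esc^{(N)}_\beta(\rho)]}
\]
with bounded $g$, plus an analogous expression on $\{\tau_1\neq m\}$. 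Equivalently, one can write $\bar{\bbP}^{(N)}_\beta(\cdot)=\bbP^{(N)}_\beta(\cdot\cap R^c)/\bbP^{(N)}_\beta(R^c)$ on the augmented tree. The problem thus reduces to convergence of escape probabilities.

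\textbf{Main obstacle.} The key step is showing that $\esc^{(N)}_\beta(v)\to\esc^{\infty}_\beta(v)$ in $\mathrm{P}$-probability under the coupling, and that $\mathrm{E}^{(N)}[\esc^{(N)}_\beta(\rho)]$ stays bounded away from $0$. The escape probability depends on the whole subtree, so the local coupling alone is not enough. I would write
\[
\esc_\beta(v)=\lim_{L\to\infty}P_{T,\beta}^v\bigl(\text{hit generation }|v|+L\text{ before }v^*\bigr).
\]
For fixed $L$, the hitting event again involves only finitely many marked-vertex visits before time $K$ with high probability, so the coupling above gives convergence up to an error $\varepsilon(K,L)$. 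Uniformity in $N$ of the remaining errors needs two tail bounds. First, the probability of reaching level $L$ but returning to $v^*$ afterwards should be small uniformly in $N$. Each marked vertex hit pushes the walk up with probability at least $N\beta/(N\beta+1)$, and since $p_0=0$ every vertex has at least one child, so a uniform lower bound on conductance and escape should follow from comparison with the marked-vertex structure. Second, the time to reach level $L$ should be tight in $N$, which I would take from the uniform moment bounds of Section~\ref{sec:moments}, in the spirit of Lemma~\ref{lem:moment-uniform2nd}. Finally, bounded convergence gives the convergence of the ratios above, which proves the lemma.
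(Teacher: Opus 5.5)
There is a genuine gap at the step you yourself flag as the main obstacle. You reduce the lemma to convergence of the quenched escape probabilities $\esc^{(N)}_\beta(X_m)$, with control uniform in $N$. The tail bound this requires is only asserted: that the walk reaches generation $|v|+L$ and afterwards still returns to $v^*$ with small probability, uniformly in $N$. A lower bound on conductance or escape probability does not give this. In general such a bound holds only on average over the tree, not vertex by vertex. For instance, if $p_1>0$ and $\beta<1$, a vertex at the top of a long chain of single children has tiny escape probability. Averaging also does not factor over the product of return probabilities along the ancestral path. Controlling exactly this dependence is what the recursive bound $W_m\le a_\beta^m$, with the $s$-th power trick, achieves in the proof of Lemma \ref{lem:momentuniform-height}. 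With that input your route could probably be completed, but as written it is not a proof.

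There is also a smaller slip. Your event $F_m$ characterizes ``$m$ is a regeneration time'', not ``$m$ is the first one''. You must add that for every $1\le k<|X_m|$ the walk visits generation $k-1$ at some time in $(\eta_k,m)$. This is the paper's event $C_m$, and it is still a function of the first $m$ steps.

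The missing idea is that the escape probabilities never need to be controlled. On $C_m$ the vertex $X_m$ lies in a generation not visited before time $m$, so neither $X_m$ nor any of its descendants has been explored. Moreover, the vector in the lemma does not involve the label of $X_m$. Under the annealed law, the subtree of $X_m$ together with the edge to $X_m^*$ therefore has the law of $T^*$ and is independent of the first $m$ steps. Hence the numerator of your ratio factorizes:
\begin{align*}
\bbE^{(N)}_\beta\big[g\,\mathbbm{1}_{C_m}\,\esc^{(N)}_\beta(X_m)\big]=\bbE^{(N)}_\beta\big[g\,\mathbbm{1}_{C_m}\big]\,\bbP^{(N)}_\beta(R^c) .
\end{align*}
The factor $\bbP^{(N)}_\beta(R^c)$ cancels the normalization exactly. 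This gives $\bar{\bbP}^{(N)}_\beta(A_{m,z,d}\cap\{\tau_1=m\})=\bbP^{(N)}_\beta(A_{m,z,d}\cap C_m)$ for the path events \eqref{eq:pathevent}, and the same identity holds in the infinite model. The right-hand side is an unconditioned event of the first $m$ steps. So your coupling paragraph alone finishes the argument: transitions agree at unmarked vertices, and at marked vertices the probability of stepping away from the root tends to $1$. This is the paper's proof.

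The paper writes out only the atoms on $\{\tau_1=m\}$, which is all that is used later for $\tau_1$, $|X_{\tau_1}|$ and $\sum_{k<\tau_1}\Delta_\beta(X_k)$. The atoms with indicator $0$ follow by decomposing over the value of $\tau_1$ and applying the same factorization there, together with the regeneration structure and induction on $m$. The uniform tightness of $\tau_1$ from Lemma \ref{lem:momentuniform} controls the tail of that decomposition. Again no convergence of escape probabilities is needed.
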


\begin{proof}
Assume that $N$ is sufficiently large, so that $v_N(\beta)>0$. 
We need a particular labeling of marked vertices and set $\hat{d}(v) = -1$ if $v$ is marked, and $\hat d(v)=d(v)$ otherwise. 
For  $m\geq 1$, $z\in \mathbb{N}_0^m$ and $ d\in (\mathbb{N}_0\cup \{-1\})^{m}$, we consider events of the form 
\begin{align} \label{eq:pathevent}
A_{m,z,d}= \{|X_1|=z_1,\dots ,|X_m|=z _m, \hat d(X_0)=d_0,\dots ,\hat d(X_{m-1})=d_{m-1} \} .
\end{align}
We denote by 
\begin{align} \label{eq:defhittingtime}
\eta_k = \inf\{n\geq 0: |X_n|=k\}
\end{align}
the hitting time of generation $k$. 
Additionally, let $C_m$ be the event that until time $m$, $\rho^*$ has not been visited and the generation of $X_m$ has not been visited before time $m$, and for every $1\leq k<|X_m|$ there exists some $\ell \in (\eta_k,m)$ with $|X_\ell |= k-1$. The event $\{\tau_1=m\}$ implies that $C_m$ holds, so that 
\begin{align} \label{eq:pathevent2}
\bbP_\beta^{(N)}(A_{m,z,d} \cap \{\tau_1=m\} \mid R^c )
= \bbP_\beta^{(N)}(A_{m,z,d} \cap C_m \cap \{\tau_1=m\} \mid R^c ) 
\end{align}
with the analogous equality under $\bbP_\beta$. 
On $C_m$, no time smaller than $m$ can be a regeneration time, so that $\tau_1=m$ if and only if after hitting $X_m$, the random walk never hits $X_m^*$ again. The Markov property implies
\begin{align*}
\bbP_\beta^{(N)}(A_{m,z,d} \cap C_m \cap \{\tau_1=m\} \cap R^c ) &= \bbP_\beta^{(N)}(A_{m,z,d}\cap C_m \cap \{\tau_1=m\}) \\
& = \bbP_\beta^{(N)}(A_{m,z,d}\cap C_m) \bbP_\beta^{(N)}(R^c) ,
\end{align*} 
which yields
\begin{align*}
\bbP_\beta^{(N)}(A_{m,z,d} \cap C_m \cap \{\tau_1=m\} \mid R^c ) & = \bbP_\beta^{(N)}(A_{m,z,d}\cap C_m) .
\end{align*}
Although under $\bbP_\beta$ the process is not Markovian anymore, the label $\hat d(X_k)$ of the vertices determine the transition probabilities, and the analogous factorization of those gives the analogous identity
\begin{align*}
\bbP_\beta(A_{m,z,d} \cap C_m \cap \{\tau_1=m\} \mid R^c ) & = \bbP_\beta(A_{m,z,d}\cap C_m) .
\end{align*}
The labels in $d$ determine when marked vertices are visited. As $N\to \infty$, the transition probabilities at unmarked vertices do not change. At a marked vertex, the probability that the distance to the root increases by one converges to 1. This implies
\begin{align*}
\bbP_\beta^{(N)}(A_{m,z,d}\cap C_m) \xrightarrow[N\to \infty]{} \bbP_\beta(A_{m,z,d}\cap C_m) .
\end{align*}
Returning to \eqref{eq:pathevent2}, we obtain the convergence
\begin{align*}
\bbP_\beta^{(N)}(A_{m,z,d} \cap \{\tau_1=m\} \mid R^c )
\xrightarrow[N\to \infty]{} 
\bbP_\beta(A_{m,z,d} \cap \{\tau_1=m\} \mid R^c ), 
\end{align*}
and thus the weak convergence of the distribution of  
\begin{align*} 
\big( |X_1|,\dots ,|X_m|,\hat d(X_0),\dots ,\hat d (X_{m-1}), \mathbbm{1}_{\{\tau_1 =m \}} \big) 
\end{align*}
under $\bbP_\beta^{(N)} (\cdot \mid R^c)$ to the distribution under $\bbP_\beta(\cdot \mid R^c)$. The statement of the lemma follows noting that $\Delta_\beta(v)$ is a deterministic function of $\hat d(v)$ converging as $N\to \infty$ when $v$ is marked. 
\end{proof}

We complement the weak convergence in Lemma \ref{lem:regweakconv} with the uniform moment bound 
\begin{align} \label{eq:2ndmoment}
\sup_{N\geq N_0} \bar{\bbE}_\beta^{(N)}[\tau_1^2 ] < \infty
\end{align}
for the regeneration times provided by Lemma \ref{lem:momentuniform}. 
The weak convergence in Lemma \ref{lem:regweakconv} and the arguments for \eqref{eq:expectationconv} imply that
\begin{align*}
\bar{\bbE}_\beta^{(N)}[\tau_1 ] \xrightarrow[N\to \infty]{}  \bar{\bbE}_\beta[\tau_1 ] .
\end{align*}
Since $|X_{\tau_1}|\leq \tau_1$, the analogous argument can be applied to the regeneration position, showing
\begin{align*}
\bar{\bbE}_\beta^{(N)}[|X_{\tau_1}|] \xrightarrow[N\to \infty]{}  \bar{\bbE}_\beta[|X_{\tau_1}| ] .
\end{align*}
We therefore obtain 
\begin{align} \label{eq:regtimeconv}
v_N(\beta) = \frac{\bar{\bbE}_\beta^{(N)}[|X_{\tau_1}| ]}{\bar{\bbE}_\beta^{(N)}[\tau_1 ]}
 \xrightarrow[N\to \infty]{}  
\frac{\bar{\bbE}_\beta[|X_{\tau_1}| ]}{\bar{\bbE}_\beta[\tau_1  ]}
= v_\infty(\beta) .
\end{align}

\section{The derivative of the speed}
\label{sec:diffconvergence}

The existence of the derivative $v_\infty'$ is a consequence of Theorem \ref{thm:infinitespeeddiff}. Now assume that there exists a $\gamma>1$ with $\sum_{k=1}^\infty p_k\gamma^k<\infty$. Under this assumption, Bowditch and Tokushige \cite{bowditch2020differentiability} proved differentiability of $v_N$. 
We first recall a formula for the derivative of the speed given in Proposition 3.4 in \cite{bowditch2020differentiability}, noting that therein, the bias is parametrized as $\lambda=1/\beta$: 
\begin{align} \label{eq:bowditchtokushigediff}
v_N'(\beta) & = \frac{1}{2\beta \bbE_\beta^{(N)}[\tau_2-\tau_1]} \bbE_\beta^{(N)}\left[ \left( |X_{\tau_2}| - |X_{\tau_1}| - (\tau_2-\tau_1) v_N(\beta)\right)\left( |X_{\tau_2}| - |X_{\tau_1}| - \sum_{k=\tau_1}^{\tau_2-1} \Delta_\beta(X_k) \right)\right] \notag \\
& = \frac{1}{2\beta \bar{\bbE}_\beta^{(N)}[\tau_1]} \bar{\bbE}_\beta^{(N)}\left[ \left( |X_{\tau_1}|  - \tau_1 v_N(\beta)\right)\left( |X_{\tau_1}| - \sum_{k=0}^{\tau_1-1} \Delta_\beta(X_k) \right)\right] .
\end{align}
As in Section \ref{sec:convergence}, $\Delta_\beta(v)$ is the local bias at vertex $v$ and we write $\bar{\bbP}_\beta^{(N)}$ for the conditioned law $\bbP_\beta^{(N)}(\cdot \mid R^c)$. We already established in Section \ref{sec:convergence} the convergence
\begin{align*}
\bar{\bbE}_\beta^{(N)}[\tau_1 ] \xrightarrow[N\to \infty]{} \bar{\bbE}_\beta[\tau_1 ] .
\end{align*}
and $v_N(\beta)\to v_\infty(\beta)$. By Lemma \ref{lem:regweakconv}, the distribution of 
\begin{align*}
\left( |X_{\tau_1}|  - \tau_1 v_N(\beta)\right)\left( |X_{\tau_1}| - \sum_{k=0}^{\tau_1-1} \Delta_\beta(X_k) \right)
\end{align*}
under $\bar{\bbP}_\beta^{(N)}$ converges weakly to the distribution of 
\begin{align*}
\left( |X_{\tau_1}|  - \tau_1 v_\infty(\beta)\right)\left( |X_{\tau_1}| - \sum_{k=0}^{\tau_1-1} \Delta_\beta(X_k) \right)
\end{align*}
under $\bar{\bbP}_\beta$. We can bound
\begin{align*}
\left| \left( |X_{\tau_1}|  - \tau_1 v_N(\beta)\right)\left( |X_{\tau_1}| - \sum_{k=0}^{\tau_1-1} \Delta_\beta(X_k) \right)\right| \leq 4\tau_1^2 .
\end{align*}
Lemma \ref{lem:momentuniform} provides a uniform moment bound for $\tau_1^3$. We can then conclude as in \eqref{eq:expectationconv}, so that
\begin{align}
v_N'(\beta) \xrightarrow[N\to \infty]{} \frac{1}{2\beta \bar{\bbE}_\beta[\tau_1]} \bar{\bbE}_\beta\left[  \left( |X_{\tau_1}|  - \tau_1 v_\infty(\beta)\right)\left( |X_{\tau_1}| - \sum_{k=0}^{\tau_1-1} \Delta_\beta(X_k) \right)\right] .
\end{align}
We show that the limit agrees with the expression for the derivative $v_\infty'(\beta)$ in Theorem \ref{thm:infinitespeeddiff}. Define the two-dimensional process
\begin{align}
(W_{1,n},W_{2,n}) = \left( |X_n|-nv_\infty(\beta) , |X_{n}| - \sum_{k=0}^{n-1} \Delta_\beta(X_k) \right) .
\end{align}
The regeneration times $\tau_k$ decompose the trajectory of  $(W_{1,n},W_{2,n})_n$ into independent increments, 
\begin{align*}
(W_{1,\tau_k},W_{2,\tau_k}) - (W_{1,\tau_{k-1}},W_{2,\tau_{k-1}}), \qquad k\geq 1 . 
\end{align*}
Together with the higher moment bounds for $\tau_k-\tau_{k-1}$ proven in Lemma \ref{lem:momentuniform} one can show the  convergence  
\begin{align*}
\frac{1}{\sqrt{n}}(W_{1,n},W_{2,n}) \xrightarrow[n\to \infty]{d} (W_1,W_2)
\end{align*}
in distribution under $\bbP_\beta$, see \cite[p. 621]{bowditch2020differentiability} for the analogous argumentation under $\bbP_\beta^{(N)}$. The limit is a two-dimensional Gaussian random variable and the covariance between the different components is given by 
\begin{align*}
\bbE_\beta[W_1W_2] & =
\frac{\bbE_\beta[(W_{1,\tau_2}-W_{1,\tau_1})(W_{2,\tau_2}-W_{2,\tau_1})]}{\bbE_\beta[\tau_2-\tau_1]} 
= \frac{\bar{\bbE}_\beta[W_{1,\tau_1}W_{2,\tau_1}]}{\bar{\bbE}_\beta[\tau_1]} \\
&=  \frac{1}{\bar{\bbE}_\beta[\tau_1]} \bar{\bbE}_\beta\left[  \left( |X_{\tau_1}|  - \tau_1 v_\infty(\beta)\right)\left( |X_{\tau_1}| - \sum_{k=0}^{\tau_1-1} \Delta_\beta(X_k) \right)\right] .
\end{align*}
Instead of the vertex regeneration times, we may consider the subsequent hitting times $\sigma_k$ of marked vertices as in Section \ref{sec:infinitespeed}. They also decompose the trajectory into independent segments, and $\sigma_k-\sigma_{k-1}$ has bounded moments of any order by Lemma \ref{lem:moment-uniform2nd}. The analogous argumentation then yields
\begin{align*}
\bbE_\beta[W_1W_2] =\frac{\bbE_\beta[(W_{1,\sigma_2}-W_{1,\sigma_1})(W_{2,\sigma_2}-W_{2,\sigma_1})]}{\bbE_\beta[\sigma_2-\sigma_1] }
= \frac{{\bbE}^*_\beta[W_{1,\sigma_1}W_{2,\sigma_1}]}{{\bbE}^*_\beta[\sigma_1]} \\
\frac{1}{\bbE_\beta^*[\sigma_1]} \bbE_\beta^*\left[ \left( |X_{\sigma_1}| - \sigma_1 v_\infty(\beta)\right)\left( |X_{\sigma_1}| - \sum_{k=0}^{\sigma_1-1} \Delta_\beta(X_k) \right)\right] ,
\end{align*}
which, after dividing by $2\beta$, gives the formula for the derivative as stated in Theorem \ref{thm:infinitespeeddiff}. 
This concludes the proof of $v_N'(\beta)\to v_\infty'(\beta)$.

\section{Monotonicity of the speed}
\label{sec:monotonicity}

In this section, we assume that $\nu=\delta_d$ for some $d\geq 1$. 
We begin with a slight reformulation of the speed formula. Starting from a marked root $\rho$, the first visited vertex is then marked with probability $\alpha$ or has $d$ descendants with probability $(1-\alpha)$. In the former case, $\sigma=|X_\sigma|=1$, with $\sigma$ as defined in \eqref{eq:hittingsigma}. Using Formula \eqref{eq:speedformula}, we have
\begin{align} \label{eq:couplingspeed}
v_\infty(\beta) = \frac{\bbE^*_{\beta}[|X_\sigma|]}{\bbE^*_{\beta}[\sigma]}
= \frac{1+(1-\alpha)\hat{\bbE}_{\beta}[|X_\sigma|]}{1+(1-\alpha)\hat{\bbE}_{\beta}[\sigma]} ,
\end{align}
where the expectation is with respect to a measure $\hat{\bbP}_\beta^\infty$, under which the random walk starts at the root $r$ of an augmented Galton-Watson tree $T_r$, with $r$ unmarked and attached to $r$ an additional marked parent vertex $r^*$. Lemma \ref{lem:criticalvalue} implies that the expectation $\bbE_\beta^*[\sigma]$, and then also the expectation of $\sigma$ under $\hat{\bbP}_\beta$, is finite. 
  
The connected component $T_r^d$ of $r$ in $T_r$ consisting of unmarked vertices is a Galton-Watson tree with binomial offspring distribution with parameters $d,1-\alpha$. 
Until $(X_n)_n$ hits a marked vertex, it only visits vertices with $d$ descendants, where it has local bias
\begin{align*}
\Delta(\beta) = \frac{d\beta-1}{d\beta+1} 
\end{align*} 
(slightly changing the notation for the bias). 
This implies that the process $(|X_{n\wedge \sigma}|-(n\wedge \sigma)\Delta(\beta))_n$ is a martingale, so that by optional stopping,
\begin{align*}
\hat{\bbE}_{\beta}[|X_\sigma|] = \hat{\bbE}_{\beta}[\sigma] \cdot \Delta(\beta) .
\end{align*}
This allows to rewrite \eqref{eq:couplingspeed} as
\begin{align} \label{eq:couplingspeed2}
v_\infty(\beta) = 
 \frac{1+(1-\alpha) \Delta(\beta) \hat{\bbE}_{\beta}[\sigma]}{1+(1-\alpha)\hat{\bbE}_{\beta}[\sigma]} .
\end{align}
We now compare the speed for two different bias parameters $\beta_1<\beta_2$. To simplify notation, we write $\hat p_i=d\beta_i/(d\beta_i+1)$ for the probability that the random walk with bias $\beta_i$ moves from a vertex with $d$ descendants to an offspring vertex, such that $\Delta(\beta_i)=2\hat p_i-1$, and we also write $\hat{\bbE}_{i}[\sigma]$ for $\hat{\bbE}_{\beta_i}[\sigma]$. The aim is to show positivity of the difference
\begin{align*}
& v_\infty(\beta_2) - v_\infty(\beta_1) = 
 \frac{1+(1-\alpha) (2\hat p_2-1) \hat{\bbE}_{2}[\sigma]}{1+(1-\alpha)\hat{\bbE}_{2}[\sigma]} -  \frac{1+(1-\alpha) (2\hat p_1-1) \hat{\bbE}_{1}[\sigma]}{1+(1-\alpha)\hat{\bbE}_{1}[\sigma]} \\
 & = 2(1-\alpha) \frac{\hat{\bbE}_{1}[\sigma]-\hat{\bbE}_{2}[\sigma] + \hat p_2 \hat{\bbE}_{2}[\sigma]-\hat p_1 \hat{\bbE}_{1}[\sigma] + (1-\alpha)(\hat p_2-\hat p_1)\hat{\bbE}_{1}[\sigma]\hat{\bbE}_{2}[\sigma]}{(1+(1-\alpha)\hat{\bbE}_{1}[\sigma])(1+(1-\alpha)\hat{\bbE}_{2}[\sigma])}\\
 & = 2(1-\alpha) \frac{(\hat p_2-\hat p_1)\hat{\bbE}_{1}[\sigma](1+(1-\alpha)\hat{\bbE}_{2}[\sigma]) + (1-\hat p_2)(\hat{\bbE}_{1}^\infty[\sigma]-\hat{\bbE}_{2}[\sigma])}{(1+(1-\alpha)\hat{\bbE}_{1}[\sigma])(1+(1-\alpha)\hat{\bbE}_{2}[\sigma])} \\
 & \geq 2(1-\alpha) \frac{(\hat p_2-\hat p_1) - (1-\hat p_2)(\hat{\bbE}_{1}[\sigma]-\hat{\bbE}_{2}[\sigma])}{(1+(1-\alpha)\hat{\bbE}_{1}[\sigma])(1+(1-\alpha)\hat{\bbE}_{2}[\sigma])},
\end{align*} 
where for the last inequality we used that $\hat p_2>\hat p_1$ and $\hat{\bbE}_{i}[\sigma]\geq 1$. Monotonicity of the speed is then proven once we show that for $\alpha$ sufficiently close to 1
\begin{align} \label{eq:couplingbound1}
 \big| \hat{\bbE}_{1}[\sigma]-\hat{\bbE}_{2}[\sigma]\big| \leq \hat p_2-\hat p_1
\end{align}
for all $0\leq \beta_1<\beta_2<\infty$ (that is, for all $0\leq \hat p_1<\hat p_2<1$).

We introduce a coupling of two processes $(X_{1,n})_{n\geq 0}$ and $(X_{2,n})_{n\geq 0}$ on $T_r$, such that  $(X_{1,n})_{n\geq 0}$ has marginal law $P_{T_r,\beta_1}^r$ and $(X_{2,n})_{n\geq 0}$ has law $P_{T_r,\beta_2}^r$. For this, assume that $U_0,U_1,\dots $ are independent random variables, uniformly distributed on $(0,1)$ on some probability space with law denoted by $\hat P_{T_r}$. We may then assume that the two processes satisfy the following. 
\begin{itemize}
\item[(1)] When $X_{1,n}$ and $X_{2,n}$ are both at the same unmarked vertex $v$, and if $\hat p_1<U_n\leq \hat p_2$, $X_{1,n}$ moves to the parent vertex of $v$ and $X_{2,n}$ moves to a uniformly chosen descendant of $v$.
\item[(2)] When $X_{1,n}$ and $X_{2,n}$ are both at the same unmarked vertex $v$, case (1) has not occurred so far, and if $U_n \leq \hat p_1$, they both move to the same uniformly chosen descendant of $v$.
\item[(3)] When $X_{1,n}$ and $X_{2,n}$ are both at the same unmarked vertex $v$, case (1) has not occurred so far and if $U_n > \hat p_2$, they both move to the parent vertex of $v$.
\end{itemize}
The choice of the offspring vertices in cases (1) and (2) are independent of each other and independent of the random variables $U_n,n\geq 0$. Once case (1) occurs or the processes hit a marked vertex, they continue independently of each other with transition probabilities determined by $P_{T_r,\beta_1}^r$ and  $P_{T_r,\beta_2}^r$, respectively. It is straightforward to check that then the two processes have the correct marginal laws. 

Let $\sigma_i$ be the time when process $(X_{i,n})_n$ hits a marked vertex and let $D$ be the first time  when case (1) above occurs. Then $\sigma_1,\sigma_2$ and $D$ are stopping times with respect to the filtration generated by $(U_n,X_{1,n},X_{2,n})_n$. We have, conditioning on $D$,
\begin{align*}
 \hat{\bbE}_{1}[\sigma]-\hat{\bbE}_{2}[\sigma]  & = \mathrm{E} \left[ \hat E_{T_r}[\sigma_1 - \sigma_2]\right] = \mathrm{E} \left[ \hat E_{T_r}[\sigma_1 - \sigma_2]\mathbbm{1}_{\{|T_r^d|>1\}}\right] \\
 & = \mathrm{E} \left[ \hat E_{T_r}\big[ \hat E_{T_r}[\sigma_1 - \sigma_2|D]\mathbbm{1}_{\{D<\sigma_1\wedge \sigma_2\}}\big]\mathbbm{1}_{\{|T_r^d|>1\}}\right]
\end{align*}
If $D$ is smaller than $\sigma_1$ and $\sigma_2$, the difference between $\sigma_1$ and $\sigma_2$ is the time after one of the processes has hit a marked vertex, until the other one hits a marked vertex. 
The hitting times $\sigma_i$ are hitting times of the complement of $T_r^d$, and if $|T_r^d|=1$, we have $\sigma_1=\sigma_2$. 
We use the bound in Lemma \ref{lem:moment-finitesetescape}, which shows that for any starting vertex $v\in T_r^d$ and for any bias $\beta>0$,
\begin{align} \label{eq:monotonicityhitting}
E_{T,\beta}^v [\sigma] \leq 2|T_r^d|^2 .
\end{align} 
Together with the strong Markov property we obtain
\begin{align*}
\left| \hat{\bbE}_{1}[\sigma]-\hat{\bbE}_{2}[\sigma]\right| & \leq  \mathrm{E} \left[ \hat E_{T_r}\big[ \hat E_{T_r}[|\sigma_1 - \sigma_2|\, |D]\mathbbm{1}_{\{D<\sigma_1\wedge \sigma_2\}}\big]\mathbbm{1}_{\{|T_r^d|>1\}} \right] \\
& \leq \mathrm{E} \left[ \hat E_{T_r}\big[ 2|T_r^d|^2  \mathbbm{1}_{\{D<\sigma_1\wedge \sigma_2\}}\big] \mathbbm{1}_{\{|T_r^d|>1\}}\right] \\
& = \mathrm{E} \left[2|T_r^d|^2  \hat P_{T_r}(D<\sigma_1\wedge \sigma_2)\mathbbm{1}_{\{|T_r^d|>1\}} \right] 
\end{align*}
Until time $\sigma_1\wedge \sigma_2$, the probability that $D$ occurs at a given time is equal to the probability of case (1) in the construction of the coupling, which is $\hat p_2-\hat p_1$. This gives
\begin{align*}
\hat P_{T_r}(D<\sigma_1\wedge \sigma_2) \leq (\hat p_2-\hat p_1) \hat E_{T_r}[\sigma_1]\leq  2(\hat p_2-\hat p_1) |T_r^d|^2 ,
\end{align*}
where we used again the bound \eqref{eq:monotonicityhitting}. We obtain 
\begin{align}
\left| \hat{\bbE}_{1}[\sigma]-\hat{\bbE}_{2}[\sigma]\right| \leq 4(\hat p_2-\hat p_1)\mathrm{E} \left[|T_r^d|^4\mathbbm{1}_{\{|T_r^d|>1\}} \right] . 
\end{align}
In view of \eqref{eq:couplingbound1}, the proof of monotonicity is completed once we show that 
\begin{align} \label{eq:couplingbound2}
4 \mathrm{E} \left[ |T_r^d|^4\mathbbm{1}_{\{|T_r^d|>1\}} \right] <1
\end{align}
for $\alpha$ sufficiently close to 1. 

Recall that $T_r^d$ is the subtree of $T_r$ containing $r$ formed by unmarked vertices. In particular, $T_r^d$ is a Galton-Watson tree with binomial offspring distribution with parameters $d,1-\alpha$.  
For the total progeny of $T_r^d$, the hitting time formula \cite[Theorem 3.13]{vanDerHofstad2016random1} yields
\begin{align*}
P( |T_r^d| = m) = \frac{1}{m} P(W_1+\dots +W_m = m-1) ,
\end{align*}
where $W_i$ are independent, binomially distributed random variable with parameters $d,1-\alpha$. Since the sum is binomially distributed with parameters $md,1-\alpha$, 
\begin{align*}
P( |T_r^d| = m) = \frac{1}{m} {md \choose m-1} (1-\alpha)^{m-1} \alpha^{md-m+1} .
\end{align*}
Using ${n\choose k} \leq (\mathrm{e} n/k)^{k}$, we get
\begin{align*}
P( |T_r^d| = m) \leq  \frac{1}{m} \left(\frac{\mathrm{e} md}{m-1}\right)^{m-1} (1-\alpha)^{m-1} 
\leq \frac{1}{m} (2\mathrm{e} d (1-\alpha))^{m-1} .
\end{align*}
This implies
\begin{align*}
4 \mathrm{E} \left[|T_r^d|^4\mathbbm{1}_{\{|T_r^d|>1\}} \right] 
& \leq 4  \sum_{m=2}^\infty m^3 (2\mathrm{e} d (1-\alpha))^{m-1} \\
& \leq 8  \sum_{m=2}^\infty 4^{m-1}  (2\mathrm{e} d (1-\alpha))^{m-1} \\
& = 8 \frac{8\mathrm{e} d (1-\alpha)}{1-8\mathrm{e} d (1-\alpha)}
\end{align*}
for $8\mathrm{e} d (1-\alpha)<1$. This upper bound is less than 1 provided
\begin{align}
1-\alpha < \frac{1}{72 \mathrm{e} d} .
\end{align}
If this bound holds, we have \eqref{eq:couplingbound2}, which implies \eqref{eq:couplingbound1}, and then $v_\infty(\beta_1)<v_\infty(\beta_2)$.

\section{Moment bounds} 
\label{sec:moments}

\begin{lem}\label{lem:moment-finitesetescape}
Let $T$ be an augmented tree without leaves, $A$ a finite connected subset of the vertices of $T$ containing the root $\rho$, but not the artificial parent $\rho^*$, and such that every vertex in $A$ has finite degree in $T$. Let $\eta(A^c)$ be the hitting time of $A^c$. Then for any $v\in A$, we have for all $\beta \geq 0$  
\begin{align} 
E_{T,\beta}^v [\eta({A^c})] \leq 2|A|^2 .
\end{align}
\end{lem}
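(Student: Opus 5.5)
We bound the exit time one step at a time: from every vertex of $A$ the walk has a path of at most $|A|$ steps leading out of $A$, and each step along it has probability at least $1/2$ as long as we choose the direction according to $\beta$. The obstacle is that, for a fixed path, the step probabilities degenerate as $\beta\to 0$ or $\beta\to\infty$. Splitting into the cases $d(w)\beta\geq 1$ and $d(w)\beta<1$ at each vertex handles this. We also use electrical networks as a backup.

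\emph{Plan via a direct drift argument.} Fix $v\in A$. Since $T$ has no leaves and $A$ is finite, every vertex of $A$ has a child, and there is a descending path from $v$ leaving $A$ within at most $|A|$ steps. The parent step out of $\rho$ goes to $\rho^*\notin A$, so ascending to $\rho^*$ also exits within $|v|+1\leq |A|$ steps. At a vertex $w\in A$ with $d(w)$ children, the walk moves up with probability $1/(1+d(w)\beta)$ and moves to \emph{some} child with probability $d(w)\beta/(1+d(w)\beta)$. One of these is at least $1/2$, but a specific child may have small probability, so a single fixed path does not work.

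Instead, I use the generation of the walk as a potential. Let $h=\max_{w\in A}|w|+1\leq |A|$. Consider $f(w)$, the expected exit time, and compare with the distance process. The cleanest route is a Lyapunov function combining both directions. Define $g(w)=$ the number of steps needed to exit by the ``likely'' moves. At $w$ with $d(w)\beta\geq1$, any child move reduces the remaining depth in $A$ below $w$. At $w$ with $d(w)\beta<1$, an up move reduces $|w|$.

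This mixed potential is not monotone, so as a cleaner fallback I use electrical networks.

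\emph{Plan via commute time.} Collapse $A^c$ to a single vertex $z$. Then $E^v[\eta(A^c)]\leq E^v[\eta_z]+E^z[\eta_v]=\mathcal R(v\leftrightarrow z)\, 2W$, where $W$ is the total conductance of the edges incident to $A$. With conductances $\beta^{|e|}$, where the edge into generation $n+1$ has weight $\beta^n$ and the edge $\rho\rho^*$ has weight $\beta^{-1}$, the quantity $W$ need not be comparable to $\mathcal R^{-1}$ uniformly in $\beta$. I therefore normalize locally. Choose $u\in A$ maximizing the conductance level. Then $\mathcal R(v\leftrightarrow z)\leq$ the resistance along the path from $v$ to $z$ chosen monotone in the direction where the conductances increase. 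This is upward if $\beta\leq1$, to $\rho^*$ whose edge has conductance $\beta^{-1}\geq$ all others. It is downward if $\beta\geq 1$, along a child path to the deepest possible level. Every edge on that path has conductance at least that of the starting edge at $v$, and in the favorable direction conductances grow geometrically. The resistance is thus at most $|A|$ times $c_v^{-1}$, where $c_v$ is the smallest conductance on the path. I then bound $W$ by $2|A|$ times the largest conductance adjacent to $A$.

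The main obstacle is exactly this ratio of largest to smallest conductance, which is unbounded in $\beta$ in general. To fix it, I reduce first to $A$'s boundary in the good direction. For $\beta\leq 1$, the deep boundary edges are tiny and harmless in $W$, and the largest conductance is the $\rho^*$ edge, $\beta^{-1}$. Resistance up from $v$ is $\sum_{k\le |v|}\beta^{-(k-1)}\cdot$, which is dominated by the top terms only if we weight by $v$'s level. I would therefore instead apply the hitting-time identity $E^v[\eta_z]=\sum_{w}\mathcal R\text{-Green}$, that is, $E^v\eta_z=\sum_{w\in A} G_z(v,w)$ with $G_z(v,w)\leq G_z(w,w)=\pi(w)\mathcal R(w\leftrightarrow z)$. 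Here $\pi(w)$ is the sum of conductances at $w$. For $\beta\leq1$, take the upward path from $w$: $\pi(w)\leq (1+d(w))\beta^{|w|-1}$ and $\mathcal R(w\leftrightarrow z)\leq\sum_{k=0}^{|w|}\beta^{k-1}\leq (|w|+1)\beta^{|w|-1}$. The ratio is controlled up to degree factors, and the degree factor is absorbed since children of $w$ outside $A$ are themselves exits, giving parallel resistance. The symmetric argument works for $\beta\ge1$ going down. Summing gives $G_z(w,w)\leq 2|A|$ per vertex and hence $2|A|^2$. The case $\beta=0$ follows by direct inspection or continuity. The degree bookkeeping in $\pi(w)\mathcal R(w\leftrightarrow z)$ is where I expect the work to be.
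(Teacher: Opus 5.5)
\paragraph{Verdict: incomplete.} Your final route is legitimate and genuinely different from the paper's: you collapse $A^c$ to a vertex $z$ and use
\[
E^v[\eta(A^c)]=\sum_{w\in A}G_z(v,w)\le\sum_{w\in A}\pi(w)\mathcal R(w\leftrightarrow z).
\]
The paper instead uses the submartingale $|X_n|^2-n$ for $\beta\ge1$. For $\beta\le1$ it prunes $A^c$, uses the exact formula $E^w[\eta(w^*)]=1+2\sum_k\beta^kG_k(w)\le 2|A|$, and chains over at most $|A|$ ancestors. Your abandoned drift and commute-time attempts play no role. However, the decisive estimate for $\beta\le 1$ is not established, for two reasons.

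\textbf{The resistance bound is wrong.} $\sum_{k=0}^{|w|}\beta^{k-1}$ is the sum of the \emph{conductances} of the upward edges, not their resistances. Also, $\sum_{k=0}^{|w|}\beta^{k-1}\le(|w|+1)\beta^{|w|-1}$ is false for $\beta<1$, since the largest term is $\beta^{-1}$. The correct bound is $\mathcal R_{\mathrm{up}}(w)=\sum_{k=-1}^{|w|-1}\beta^{-k}\le(|w|+1)\beta^{-(|w|-1)}$. Only this version cancels the factor $\beta^{|w|-1}$ in $\pi(w)=\beta^{|w|-1}(1+d(w)\beta)$.

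\textbf{The degree factor is not absorbed.} Children of $w$ outside $A$ can be handled: their edges are in parallel with the upward path, so $\mathcal C(w\leftrightarrow z)\ge \mathcal C_{\mathrm{up}}+d_{\mathrm{out}}(w)\beta^{|w|}$, and a mediant inequality removes them. Children \emph{inside} $A$ cannot be handled this way. For $\beta<1$ their edges add $d_A(w)\beta^{|w|}$ to $\pi(w)$. The branches below them leave $A$ only through deeper edges of conductance $\beta^{|w|+j}$, $j\ge1$, so they may add almost nothing to $\mathcal C(w\leftrightarrow z)$. What your argument actually gives is
\[
G_z(w,w)\le (|w|+1)\bigl(1+d_A(w)\beta\bigr),\qquad d_A(w)=\#\{\text{children of } w \text{ in } A\}.
\]
For $\beta$ near $1$ and $|w|\approx d_A(w)\approx |A|/2$ this bound is of order $|A|^2/4$. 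So the claimed per-vertex estimate $G_z(w,w)\le 2|A|$, and with it ``hence $2|A|^2$'', does not follow.

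\textbf{How to close the $\beta\le1$ case.} Sum over $w$ instead of bounding vertex by vertex. Each $u\in A\setminus\{\rho\}$ is counted exactly once by its parent, so $\sum_{w\in A}(|w|+1)d_A(w)=\sum_{u\in A}|u|$. Hence for $\beta\le1$
\[
E^v[\eta(A^c)]\le\sum_{w\in A}(|w|+1)\bigl(1+d_A(w)\bigr)=|A|+2\sum_{w\in A}|w|\le |A|^2 .
\]
The last step holds because, listing the vertices of the connected set $A\ni\rho$ by increasing depth, the $k$-th one has depth at most $k-1$.

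\textbf{The $\beta\ge1$ case.} Your downward argument does work vertex by vertex, but it is not literally symmetric, and it is where the no-leaves hypothesis enters. Let $A_w$ be the set of vertices of $A$ in the subtree of $w$. Every child $u$ of $w$ starts a descending path to $A^c$ with at most $|A_w|$ edges. These paths are edge-disjoint and all their conductances are $\ge\beta^{|w|}$, so $\mathcal C(w\leftrightarrow z)\ge d(w)\beta^{|w|}/|A_w|$. Meanwhile $\pi(w)\le(1+d(w))\beta^{|w|}\le 2d(w)\beta^{|w|}$ because $d(w)\ge1$. Hence $G_z(w,w)\le 2|A_w|\le 2|A|$.

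With these two computations, plus $\beta=0$ by inspection (the walk climbs straight to $\rho^*$), your approach proves the lemma. As submitted, the step you yourself flag as ``where the work is'' is exactly the missing part.
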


\begin{proof}
First, suppose $\beta \geq 1$. We use a quadratic submartingale comparable with the classical case in \cite[Section 17.3]{levin2017markov}. For any $w\in A$, we have 
\begin{align*}
E_{T,\beta}^w\big[ |X_1|^2-|w|^2] = 1 + 2|w| \frac{d(w)\beta -1}{d(w)\beta+1} \geq 1, 
\end{align*}
which implies that 
\begin{align*}
|X_{n\wedge \eta({A^c})}|^2 - |v|^2 - (n\wedge \eta({A^c})) , \qquad n\geq 0
\end{align*}
is a submartingale under $P_{T,\beta}^v$ with respect to the filtration generated by $(X_n)_{n\geq 0}$. Thus,
\begin{align*}
E_{T,\beta}^v[\eta({A^c})\wedge n] \leq E_{T,\beta}^v[|X_{n\wedge \eta({A^c})}|^2 ] . 
\end{align*}
Since $|X_{n\wedge \eta({A^c})}|\leq |A|$, we get by monotone and dominated convergence
\begin{align*}
E_{T,\beta}^v [\eta({A^c})] \leq E_{T,\beta}^v[|X_{\eta({A^c})}|^2 ]\leq |A|^2 .
\end{align*}
Now, suppose $\beta\leq 1$. Let $T_A$ denote the subtree of $T$ when all vertices of $A^c$ are removed and only the artifical parent $\rho^*$ is kept. Then,
\begin{align*}
E_{T,\beta}^v [\eta({A^c})] \leq E_{T_A,\beta}^v [\eta({A^c})] .
\end{align*}
In $T_A$, the random walk can only exit $A$ by hitting $\rho^*$. For any $w\in T_A$, the mean time to reach $w^*$ in $T_A$ can be computed as
\begin{align*}
E_{T_A,\beta}^w [\eta(w^*)] = 1+2 \sum_{k=1}^\infty \beta^k G_k(w)\leq  1+2 \sum_{k=1}^\infty  G_k(w) ,  
\end{align*}
where $G_k(w)$ is the number of descendants of $w$ in $T_A$ at distance $k$. An upper bound is
\begin{align*}
E_{T_A,\beta}^w [\eta(w^*)] \leq 1+2(|A|-1) \leq 2|A| .
\end{align*} 
Since in order to reach $\rho^*$ from $v$, at most $|A|$ parent vertices have to be reached, we obtain
\begin{align*}
E_{T_A,\beta}^v [\eta(A^c)] = E_{T_A,\beta}^v [\eta(\rho^*)] \leq 2|A|^2 ,
\end{align*}
which finishes the proof in the case $\beta\leq 1$. 
\end{proof}

\begin{lem}\label{lem:moment-uniform2nd}
If $p_0=0$, we have for any $q\geq 1$
\begin{align*} 
\mathrm{E}^*\left[ \sup_{\beta\geq 0} E_{T,\beta} [\sigma^q] \right]<\infty .
\end{align*}
\end{lem}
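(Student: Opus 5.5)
The plan is to reduce the bound to a quenched estimate in terms of a finite random set, and then use the moment bound of Lemma \ref{lem:moment-finitesetescape} together with tail bounds on the size of that set. Under $\mathrm{P}^*$ the root is marked, so $X_1$ is the first child $u$ of $\rho$. If $u$ is marked, $\sigma=1$. Otherwise let $A$ be the connected component of $u$ in the set of unmarked vertices of the subtree rooted at $u$, and add the edge to $\rho$ as an artificial parent. Since $p_0=0$ and every vertex of $A$ is unmarked, every vertex of $A$ has finite degree and no vertex is a leaf. The component $A$ is a Galton--Watson tree whose offspring law is $\nu$ thinned by keeping each child independently with probability $1-\alpha$. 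This thinned law has mean $(1-\alpha)m(\nu)$. That alone does not make it subcritical, so I will not use it directly. Instead I note that $\sigma-1$ is exactly the hitting time of $A^c$ started from $u$, where $A^c$ consists of $\rho$ and the marked vertices adjacent to $A$.

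\textbf{Step 1 (quenched moments uniform in $\beta$).} I need $q$-th moments, not only first moments. Iterating Lemma \ref{lem:moment-finitesetescape} with the strong Markov property gives $\sup_{v\in A}P^v_{T,\beta}(\eta(A^c)>4|A|^2)\le 1/2$ by Markov's inequality. Hence $P_{T,\beta}(\sigma-1>4k|A|^2)\le 2^{-k}$ uniformly in $\beta\ge0$. This yields
\[
\sup_{\beta\ge 0}E_{T,\beta}[\sigma^q]\le C_q\,|A|^{2q}
\]
for a constant $C_q$ depending only on $q$. Lemma \ref{lem:moment-finitesetescape} is stated with $A$ containing the root, but the same proof applies with $u$ as root and $\rho$ as its artificial parent. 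The cases $\beta\le1$ and $\beta\ge1$ are already handled there. For $\beta=0$ the walk simply walks back to $\rho$, which is consistent with the bound.

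\textbf{Step 2 (annealed moments of $|A|$).} It remains to show $\mathrm{E}^*[|A|^{2q}]<\infty$ for every $q$. This is the main obstacle. The thinned Galton--Watson tree can be supercritical if $(1-\alpha)m(\nu)\ge1$, in which case $|A|=\infty$ with positive probability and the argument above fails.

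\textbf{Fallback if $A$ may be infinite.} I would then replace $A$ by a truncation. Let $A_R=A\cap\{|v|\le R\}$, and let $\sigma'$ be the exit time from $A_R$. Exiting $A_R$ either hits a marked vertex, which gives $\sigma$, or reaches depth $R$. The key point is that each newly reached generation carries a fresh chance $\alpha$ of hitting a marked child. More precisely, when the walk first reaches a new vertex $w$ at depth $k$, I use the event that the first child visited from $w$ is marked. This event has probability at least $\alpha$ along the ray explored, independently of the past, because the tree below $w$ has not been seen. So the maximal depth $H$ reached before $\sigma$ is dominated by a geometric variable with parameter $\alpha$. This domination should be made annealed: one reveals the tree along the walk, and the first child chosen at each newly reached depth is fresh.

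\textbf{Combining.} Conditionally on $H\le R$, the walk stays in $A_R$, whose size is at most $\sum_{k\le R}Z_k$, the generation sizes of the thinned tree. Since $\nu$ has finite moments of every order, $\mathrm{E}[Z_k^{p}]\le c_p^k$ for each $p$. I then bound
\[
\mathrm{E}^*\Big[\sup_\beta E_{T,\beta}[\sigma^q]\Big]\le \sum_R \mathrm{E}^*\Big[\sup_\beta E_{T,\beta}[\sigma^{q}\mathbbm 1_{\{H=R\}}]\Big]
\]
and apply Cauchy--Schwarz to each term. The difficulty is that the exponential growth $c_p^R$ competes with the geometric decay $(1-\alpha)^R$ of $H$, and these need not balance. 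I expect this competition to be the true obstacle. To resolve it I would try to get the decay of $P(H\ge R)$ quenched and uniform in $\beta$, via the marked vertices met along the walk, rather than bounding the tree size. If the paper assumes only finite mean for $\nu$, this refinement is essential, and the uniformity over $\beta\ge0$ must come from monotone comparison between the cases $\beta\le1$ and $\beta\ge1$ as in Lemma \ref{lem:moment-finitesetescape}.
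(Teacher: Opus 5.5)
Your proposal has a genuine gap: the decisive estimate is never proved, and the route through the unmarked component $A$ cannot supply it under the hypotheses of the lemma. Step 1 is correct; it is the same Markov-inequality iteration of Lemma \ref{lem:moment-finitesetescape} that the paper uses. But it reduces everything to $\mathrm{E}^*[|A|^{2q}]<\infty$, and this fails in general:
\begin{itemize}
\item $A$ is infinite with positive probability when $(1-\alpha)m(\nu)>1$;
\item $\mathrm{E}|A|=\infty$ already at criticality;
\item even in the subcritical case, a $2q$-th moment of the total progeny requires a $2q$-th moment of $\nu$, whereas only $p_0=0$ and $m(\nu)<\infty$ are assumed.
\end{itemize}
Your fallback assumes that $\nu$ has moments of all orders, which is not available. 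Even granting it, the competition you describe is structural rather than technical. For $\nu=\delta_d$ one has $\mathrm{E}[|A_R|^{2q}]\ge((1-\alpha)d)^{2qR}$, while the decay you have for $\mathbb{P}(H\ge R)$ is only $(1-\alpha)^R$, and Cauchy--Schwarz halves that exponent. So the bound you would obtain this way diverges unless $\alpha$ is close to $1$. Moreover, your geometric domination of the depth $H$ is annealed and at fixed $\beta$. Your decomposition would need control of $\mathrm{E}^*[\sup_\beta P_{T,\beta}(H\ge R)]$, which that argument does not give.

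The missing idea is to count \emph{newly discovered vertices} rather than to control the size or depth of $A$.
\begin{enumerate}
\item Reveal the environment along the walk: give the $n$-th newly discovered vertex an offspring number $D_n$ taken from an i.i.d.\ sequence $D$.
\item Let $\theta_n$ be the $n$-th discovery time. Let $G$ be the index of the first discovered vertex that is marked; a return to the marked root counts as well.
\item Then $G\le\hat G:=\inf\{n\ge 1: D_n=\infty\}$. This $\hat G$ is geometric with parameter $\alpha$, is a function of $D$ alone, and does not depend on the walk or on $\beta$.
\item For $k\le G$, at time $\theta_{k-1}$ the discovered set is connected, contains the root, and consists of exactly $k$ unmarked vertices of finite degree. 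The increment $\theta_k-\theta_{k-1}$ is the exit time of this set.
\item Lemma \ref{lem:moment-finitesetescape}, applied to this set rather than to $A$ or $A_R$, together with your Markov iteration, gives $E_{D,\beta}[(\theta_k-\theta_{k-1})^q]\le C_q k^{2q+2}$ for every $\beta\ge 0$.
\item Writing $\sigma=\sum_{k\le G}(\theta_k-\theta_{k-1})$ and using Jensen yields $E_{D,\beta}[\sigma^q]\le \hat G^{q-1}\sum_{k\le \hat G}C_q k^{2q+2}\le C_q\hat G^{3q+2}$. This has all moments, with no moment assumption on $\nu$ and no condition on $(1-\alpha)m(\nu)$.
\end{enumerate}
Your depth-record argument is a partial version of this. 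Using freshness only at new depths leaves you with the uncontrolled set $A_R$; using it at every discovery makes the set whose exit time you bound have deterministic size $k$.

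One caveat about the paper's route. The encoding by $D$ identifies $\mathrm{E}[E_{T^*,\beta}[\cdot]]$ with $\mathrm{E}[E_{D,\beta}[\cdot]]$ for each fixed $\beta$. The paper also moves the supremum over $\beta$ through this identification. Since the discovery order depends on the walk and on $\beta$, that step is where your concern about genuinely quenched uniformity in $\beta$ would still require an argument.
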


\begin{proof}
Since $\sigma=1$ if the first visited vertex is marked, it suffices to show the integrability of 
\begin{align*}
\sup_{\beta\geq 0} E_{T^*,\beta} [\sigma^q] , 
\end{align*} 
where, as introduced in Section \ref{sec:monotonicity}, $T^*$ is an augmented Galton-Watson tree with unmarked root $\rho$ and marked artificial parent $\rho^*$. Due to the independence of the offspring numbers in the Galton-Watson tree, we may encode the randomness of the environment $T^*$ in the following way: let $D=(D_1,D_2,\dots)$ be a sequence of i.i.d. random variables with marginal law as the offspring law under $\mathrm{P}$. We grow the tree as the random walk discovers new vertices, attaching descendants according to the sequence $D$. Let $\theta_n$ be the $n$-th time the random walk visits a vertex it has never seen before. When $X_{\theta_n}=\rho^*$, we stop the random walk and set $\theta_{k}=\theta_n$ for all $k>n$. When $X_{\theta_n}=v\neq \rho^*$, we attach to $v$ a number $D_n$ of descendants. Let $P_{D,\beta}$ be the law of this biased random walk on a growing tree, starting on an augmented, unmarked root with $D_0\sim \nu$ descendants. We have then
\begin{align*} 
\mathrm{E}\left[ \sup_{\beta\geq 0} E_{T^*,\beta} [\sigma^q] \right] = \mathrm{E}\left[ \sup_{\beta\geq 0} E_{D,\beta} [\sigma^q] \right] .
\end{align*}
Let $G$ be the smallest index $n$, such that $X_{\theta_n}$ is a marked vertex, that is,
\begin{align*}
G:= \inf\{n\geq 1  : X_{\theta_n} \in M\}
\leq \inf\{n\geq 1  : D_n=\infty\}=:\hat G .
\end{align*} 
Note that $X_{\theta_n}$ can be marked if the random walk has reached $\rho^*$. Therefore, the random variable $G$ depends on the trajectory of the random walk, whereas $\hat G$ is independent of it. Since each $D_n$ is infinite with probability $\alpha$, $\hat G$ is geometrically distributed with success parameter $\alpha$. The hitting time $\sigma$ can then be written as the sum of steps between the visits to new vertices as
\begin{align*}
\sigma = \sum_{k=1}^G (\theta_k-\theta_{k-1}) ,
\end{align*}
with $\theta_0=0$. The moment can be bounded by Jensen's inequality and using $G\leq \hat G$:
\begin{align*}
E_{D,\beta} [\sigma^q] & = E_{D,\beta} \left[ \left(\sum_{k=1}^G (\theta_k-\theta_{k-1})\right)^q\right] 
\leq E_{D,\beta} \left[ G^{q-1} \sum_{k=1}^G (\theta_k-\theta_{k-1})^q\right]  \\
& \leq E_{D,\beta} \left[ \hat G^{q-1} \sum_{k=1}^{\hat G} (\theta_k-\theta_{k-1})^q\right]  = \hat G^{q-1} \sum_{k=1}^{\hat G} E_{D,\beta}[(\theta_k-\theta_{k-1})^q]. 
\end{align*}
We next find a moment bound for the difference $(\theta_k-\theta_{k-1})$. At time $\theta_{k-1}$, the random walk has either reached $\rho^*$ (in which case $\theta_k=\theta_{k-1}$), or it has discovered $k$ vertices, so that Lemma \ref{lem:moment-finitesetescape} implies
\begin{align*}
E_{D,\beta}[\theta_k-\theta_{k-1}] \leq 2k^2 .
\end{align*}
Similar to \cite[Section 2.4.3]{AldFil02}, we obtain by the Markov inequality for any $m\geq 1$, after conditioning on the vertices  discovered at time $\theta_{k-1}$, 
\begin{align*}
P_{D,\beta}(\theta_k-\theta_{k-1}\geq m \cdot (4k^2) |\theta_k-\theta_{k-1}\geq (m-1) \cdot (4k^2) ) \leq \frac{1}{4k^2} E_{D,\beta}[\theta_k-\theta_{k-1}]  \leq \frac{1}{2} , 
\end{align*} 
so that by induction
\begin{align*}
P_{D,\beta}(\theta_k-\theta_{k-1}\geq  4 k^2m)\leq 2^{-m}.  
\end{align*}
This implies for the $q$-th moment
\begin{align} \label{eq:thetatailbound}
E_{D,\beta}[(\theta_k-\theta_{k-1})^q ] 
& \leq \sum_{m=1}^\infty m^q P_{D,\beta}( \theta_k-\theta_{k-1} \geq m) \notag \\
& \leq 4k^2  \sum_{m=1}^\infty (4k^2m)^q P_{D,\beta}( \theta_k-\theta_{k-1} \geq 4k^2m)\notag  \\
& \leq (4k^2)^{q+1}  \sum_{m=1}^\infty m^q 2^{-m}  \notag \\
& = C_qk^{2q+2} ,
\end{align}
for a finite constant $C_q$. Returning to the moment of $\sigma$, we bound
\begin{align*}
E_{D,\beta} [\sigma^q]  
\leq \hat G^{q-1} \sum_{k=1}^{\hat G} E_{D,\beta}[(\theta_k-\theta_{k-1})^q]
\leq \hat G^{q-1} \sum_{k=1}^{\hat G}C_qk^{2q+2}  \leq  C_q\hat G^{3q+2} .  
\end{align*}
Recall that $\hat G$ is geometrically distributed with parameter $\alpha$ and this upper bound is independent of the bias parameter $\beta$. This implies that 
\begin{align*}
\mathrm{E}\left[ \sup_{\beta\geq 0} E_{T^*,\beta} [\sigma^q] \right] = \mathrm{E}\left[ \sup_{\beta\geq 0} E_{D,\beta} [\sigma^q] \right] \leq C_q \mathrm{E} \big[ \hat G^{3q+2}\big]  
\end{align*}
is finite.
\end{proof}

\begin{lem}\label{lem:momentuniform-height}
Suppose $p_0=0$. For any $\beta>0$ there exists $N_0\in \mathbb N$ and $\theta>0$, such that   
\begin{align*}
\sup_{N\geq N_0} \bar{\mathbb{E}}^{(N)}_{\beta}\big[e^{\theta |X_{\tau_1}|}\big]<\infty . 
\end{align*}
\end{lem}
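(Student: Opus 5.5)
We need an exponential moment for the regeneration height $|X_{\tau_1}|$ under $\bar{\bbP}^{(N)}_\beta$, uniformly in $N\ge N_0$. We follow the classical argument of \cite{Dembo2002largeDeviations} (see also \cite{PerZei2008}) and track the $N$-dependence. Since $p_0=0$, the tree has no leaves and the environments for different $N$ can be coupled monotonically: a marked vertex receives $N$ children, and trees grow with $N$. All escape probabilities we need will therefore be bounded below uniformly in $N\ge N_0$.

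\textbf{Step 1: uniform escape probability.} Choose $N_0$ with $\beta m_{N_0}>1$, so that the walk is transient for all $N\ge N_0$. For a vertex $v$, write $\beta(v)$ for the probability that the walk started at $v$ never visits $v^*$. Along the coupling, the effective conductance from $v$ to infinity is nondecreasing in $N$, so $\beta(v)$ is stochastically larger than its value at $N_0$. In particular $\mathrm{P}^{(N)}(\beta(\rho)\ge c)\ge \mathrm{P}^{(N_0)}(\beta(\rho)\ge c)>0$ for some $c>0$, uniformly in $N$. A simpler bound also works: with probability $\alpha$ a vertex is marked, and then, for $N$ large, the walk escapes with probability at least $1-1/(N\beta)$ times a uniform lower bound on $\beta$ at its children.

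\textbf{Step 2: counting failed regeneration attempts.} We use the standard construction. Each time the walk reaches a new maximal level $k$ (time $\eta_k$), it attempts to escape. If it fails, it returns to level $k-1$ and must later reach a new maximum. Write $|X_{\tau_1}|=\sum_{j=1}^{J}H_j$. Here $J$ is the number of attempts, and $H_j$ is the overshoot, namely the extra height gained between the $(j-1)$-th failure, at which the walk has backtracked, and the next fresh maximum. At each fresh maximum the subtree is unexplored. By Step 1, conditionally on the past, each attempt succeeds with probability at least $p>0$ uniformly in $N$, so $J$ is dominated by a geometric variable.

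\textbf{Step 3: exponential tails for the overshoot.} On a failed attempt from level $k$, the maximal height $L$ reached before returning to $k-1$ must be controlled. The key estimate is $\bar{\bbP}^{(N)}_\beta(L\ge \ell,\ \text{return})\le C e^{-c\ell}$ uniformly in $N$. To prove it, use that reaching height $k+\ell$ and then returning to $k-1$ requires the walk to come back from distance $\ell$. Along the path, each vertex independently has conditional escape probability at least $c$ with probability bounded below, using the i.i.d. subtrees and $p_0=0$. Returning therefore requires $\ell$ essentially independent failures, giving a tail of $(1-c')^{\ell}$. Plugging this into a sum of a geometric number of i.i.d.-dominated exponential-tailed variables yields $\bar{\bbE}^{(N)}_\beta[e^{\theta|X_{\tau_1}|}]<\infty$ for $\theta$ small. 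This uses the standard bound $\bbE[(\bbE e^{\theta H})^{J}]<\infty$ when $\bbE e^{\theta H}$ is close to $1$.

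\textbf{Main obstacle.} The hard part is Step 3: getting uniformity in $N$ of the exponential decay of the failure probability from large heights. Marked vertices with $N$ children break bounded-degree arguments. However, they only help escape, and conditioning on return biases the environment toward small degrees. We would handle this via the quenched-to-annealed decoupling in \cite{Dembo2002largeDeviations}: bound the annealed probability by a product over the spine of $\mathrm{E}^{(N)}[1-\beta(v)]\le \mathrm{E}^{(N_0)}[1-\beta(v)]<1$, using the monotone coupling.
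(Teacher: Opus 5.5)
Your Steps 1 and 2 match the paper. The monotone coupling and Rayleigh's law give $\inf_{N\ge N_0}\bbP^{(N)}_\beta(R^c)=r_0>0$. Then $|X_{\tau_1}|$ is a geometric (parameter $\ge r_0$) sum of i.i.d.\ copies of $1+H$, where $H=\max_{n\le\eta^*}|X_n|$ is taken under $\bbP^{(N)}_\beta(\cdot\mid R)$.

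Everything hinges on a bound $\bbP^{(N)}_\beta(\eta_m<\eta^*<\infty)\le Ca^m$ with $a<1$ uniform in $N$, and \textbf{your Step 3 does not establish it}.

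\textbf{Why the spine product fails.} The bound by a product over the spine of $\mathrm{E}^{(N)}[1-\beta(v)]$ is valid only for a fixed, deterministic spine.
\begin{itemize}
\item Here the spine is the ancestral line of $X_{\eta_m}$, which the walk chooses as a function of the environment.
\item The factors $1-\beta(v_i)$ are not independent either, since the subtree of $v_i$ contains that of $v_{i+1}$.
\end{itemize}
Making the argument honest means writing $\bbP^{(N)}_\beta(\eta_m<\eta^*<\infty)\le\mathrm{E}^{(N)}[\sum_{|v|=m}P_{T^*,\beta}(X_{\eta_m}=v)\,P^v_{T^*,\beta}(\eta^*<\infty)]$. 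You then decouple by pruning the subtree of $v_{i+1}$ at each ancestor $v_i$, but you now pay for the number of possible spines. A first-moment version gives the per-generation factor $A_{N,\beta}(1)=\mathrm{E}^{(N)}[\sum_{i=1}^{d(\rho)}(1+\beta\sum_{j\neq i}e_\beta(T_j))^{-1}]$. By Jensen and $e_\beta\le 1$, its marked-vertex part alone is at least $\alpha N/(1+\beta(N-1))\to\alpha/\beta$. This exceeds $1$ whenever $\beta<\alpha$, so no decay comes out.

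Moreover, after pruning, vertices on the spine with a single child (probability $(1-\alpha)p_1$) have no sibling through which to escape. So the claim that every level forces an essentially independent failure is false as stated. You cite the right source, but the bound you extract from it is not what that decoupling yields.

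\textbf{The missing idea} is the Grimmett--Kesten / Dembo--Gantert--Peres--Zeitouni power trick used in the paper.
\begin{itemize}
\item Since $\sum_v P_{T^*,\beta}(X_{\eta_m}=v)\le 1$, Jensen bounds the probability by $\mathrm{E}^{(N)}[\sum_{|v|=m}P^v_{T^*,\beta}(\eta^*<\infty)^s]^{1/s}$.
\item After pruning, this is at most $A_{N,\beta}(s)^{m/s}$.
\item One then shows $A_{N,\beta}(s)\to(1-\alpha)p_1<1$ as $s\to\infty$, uniformly in $N\ge N_0$.
\end{itemize}
For the marked term $\alpha N\,\mathrm{E}^{(N)}[(1+\beta\sum_{j=1}^{N-1}e_{\beta,j})^{-s}]$, this uniformity needs $s\ge 2$. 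It also needs a binomial concentration bound on the number of children with $e_{\beta,j}\ge\delta$. Here $\delta$ and $\mathrm{P}^{(N)}(e_{\beta,1}\ge\delta)$ are bounded below uniformly in $N$ via the monotone coupling.

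Your intuition that conditioning on return favours small degrees is right, and the limit $(1-\alpha)p_1$ is its precise form. Without the $s$-th power entropy bound, however, Step 3 gives no geometric decay, let alone decay uniform in $N$.
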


\begin{proof}
We start with a uniform bound for the probability of never hitting the artificial parent of the root in an augmented tree. For a tree $T$ with root $\rho$ we denote the escape probability in the augmented tree $T^*$ by
\begin{align} \label{eq:defescapeprob}
e_\beta(T) = P_{T^*,\beta}(R^c) = P_{T^*,\beta}(X_n\neq \rho^* \text{ for all } n\geq 0 ) = \frac{C_{T^*,\beta}(\rho,\infty)}{\beta^{-1}+C_{T^*,\beta}(\rho,\infty)} , 
\end{align}
with $C_{T^*,\beta}(\rho,\infty)$ the effective conductance between $\rho$ and infinity. Choose $N_0$ so large that the tree $T^*$ is $\mathrm{P}^{(N_0)}$-almost surely transient and this conductance is positive. 

There is a straightforward coupling of the underlying Galton-Watson trees for different $N$: given a Galton-Watson tree $T_N$ with offspring law $\nu_N$, attach to all marked vertices of $T_N$ an additional offspring and to that offspring a Galton-Watson tree with offspring law $\nu_{N+1}$. The resulting tree $T_{N+1}$ is a Galton-Watson tree with offspring law $\nu_{N+1}$, and $T_N$ is a subtree of $T_{N+1}$. Attaching to each marked vertex infinitely many offspring and to those descendants independent trees with law $\mathrm{P}$ yields a Galton-Watson tree $T_\infty$ with law $\mathrm{P}$, and $T_N\subset T_\infty$ for all $N$. Note that the above increasing coupling cannot be applied when $p_0>0$ and the trees are conditioned on survival. 

The same increasing coupling can be applied to augmented trees $T_N^*$. 
By Rayleigh's monotonicity law, $C_{T_N^*,\beta}(\rho,\infty)$ is increasing in $N$ and converges to $C_{T_\infty^*,\beta}(\rho,\infty)$. This shows that 
\begin{align*}
\bbP_\beta^{(N)}(R^c) = \mathrm{E}^{(N)} \left[ \frac{C_{T^*,\beta}(\rho,\infty)}{\beta^{-1}+C_{T^*,\beta}(\rho,\infty)} \right]  
\end{align*}
is increasing in $N$, so that 
\begin{align} \label{eq:returnprobbound}
\inf_{N\geq N_0} \bbP_\beta^{(N)}(R^c) = \bbP_\beta^{(N_0)}(R^c)=:r_0 >0 , 
\end{align}
and bounded convergence yields
\begin{align} \label{eq:returnprobconv}
\bbP_\beta^{(N)}(R^c)  \xrightarrow[N\to \infty]{}
 \bbP_\beta(R^c) .
\end{align} 
In the following, we frequently use the uniform bound \eqref{eq:returnprobbound} to simplify
\begin{align} \label{eq:conditionallawbound}
\bar\bbE_\beta^{(N)} [Z] = \bbE_\beta^{(N)} [Z\mathbbm{1}_{R^c}] \bbP_\beta^{(N)}(R^c)^{-1} \leq r_0^{-1} \bbE_\beta^{(N)} [Z]
\end{align}
for $N\geq N_0$ and $Z$ a nonnegative random variable. In particular, the statement of the lemma is proven once we show 
 \begin{align*}
\sup_{N\geq N_0} {\mathbb{E}}^{(N)}_{\beta}[e^{\theta |X_{\tau_1}|}]<\infty 
\end{align*}
for a $\theta>0$. We proceed analogous to \cite[Section 4.2]{glatzel2024CLT}: when the random walk reaches a generation it has never seen before in a vertex $v$, it has probability $\bbP_\beta^{(N)}(R^c)$ to regenerate. If the walker returns to $v^*$, the next potential generation for a regeneration point is  $|v| + H_v+1$, with $H_v$ the same distribution as 
\begin{align*}
H = \max_{0\leq n\leq \eta^*} |X_n|
\end{align*}
under $\bbP_\beta^{(N)}$ conditioned on $R=\{\eta^*<\infty\}$, with $\eta^*$ the hitting time of $\rho^*$. The independence of such excursions leads to 
\begin{align} \label{eq:regheight}
\bbE_\beta^{(N)}\big[ e^{\theta |X_{\tau_1}|}\big] = \bbE_\beta^{(N)}\left[ \exp\left\{ \theta +\theta \sum_{k=1}^{G-1} (H_k+1)\right\}\right] , 
\end{align}
with $G$ a geometric random variable with success parameter $\bbP_\beta^{(N)}(R^c)\geq r_0$ and $H_1,H_2,\dots $ i.i.d. with the same marginal distribution as $H$ under $\bbP_\beta^{(N)}(\cdot \mid R)$, independent of $G$. Since $G$ has exponential moments, it suffices to show exponential moments of $H$. Let $G_m(\rho)$ be the set of vertices of $T$ in generation $m$ and $\eta_m$ the hitting time of generation $m$ as defined in \eqref{eq:defhittingtime}, then by the Markov property,
\begin{align}\label{eq:regheightrecursion}
\bbP_\beta^{(N)} (H\geq m \mid R) & \leq \bbP_\beta^{(N)}(R)^{-1} \bbP_\beta^{(N)} (\eta_m<\eta^*<\infty)\notag  \\
& \leq \bbP_\beta^{(N)}(R)^{-1} \mathrm{E}^{(N)}\left[ \sum_{v\in G_m(\rho)} P^v_{T^*,\beta}(\eta^*<\infty)P_{T^*,\beta}(X_{\eta_m}=v)  \right] \notag \\
& \leq \bbP_\beta^{(N)}(R)^{-1} \mathrm{E}^{(N)}\left[ \sum_{v\in G_m(\rho)} P^v_{T^*,\beta}(\eta^*<\infty)^sP_{T^*,\beta}(X_{\eta_m}=v)  \right]^{1/s} ,  
\end{align}
where the last bound is Jensen's inequality for a $s\geq 2$ later to be specified. 
The probability $\bbP_\beta^{(N)}(R)$ is bounded from below by 
\begin{align*}
(1-\alpha) p_\ell \frac{1}{\beta \ell +1} >0, 
\end{align*}
with $\ell$ some integer such that $p_\ell>0$. Therefore, it suffices to bound  
\begin{align*}
W_m = \mathrm{E}^{(N)}\left[ \sum_{v\in G_m(\rho)} P^v_{T^*,\beta}(\eta^*<\infty)^s\right] . 
\end{align*}
We find a recursive bound for $ W_m$ inspired by arguments from \cite{Dembo2002largeDeviations}, which are based on ideas of \cite{grimmett1984random}. 
For a vertex $v=v_m$ in generation $m$, let $v_0,v_1,\dots ,v_m$ be the vertices on the path from $\rho=v_0$ to $v$. Then
\begin{align*} 
P^v_{T^*,\beta}(\eta^*<\infty) = \prod_{i=0}^m P^{v_i}_{T^*,\beta}(\eta_{i-1}<\infty) \leq \prod_{i=0}^{m-1} P^{v_i}_{T^*(v_{i+1}),\beta}(\eta_{i-1}<\infty) , 
\end{align*}
where $T^*(v_{i+1})$ is the subtree of $T^*$ with descendants of  $v_{i+1}$ removed, and $\eta_{-1}$ is the hitting time $\eta^*$ of $\rho^*$. This bound implies
\begin{align*}
W_m\leq \hat W_m := \mathrm{E}^{(N)}\left[  \sum_{v\in G_m(\rho)} \prod_{i=0}^{m-1} P^{v_i}_{T^*(v_{i+1}),\beta}(\eta_{i-1}<\infty)^s \right] .
\end{align*}
Removing descendants of vertex $v_{i+1}$ renders the hitting probabilities independent under $\mathrm{P}^{(N)}$. By conditioning on the different subtrees, we obtain
\begin{align*}
\hat W_m 
& = \mathrm{E}^{(N)}\left[   \sum_{v\in G_m(\rho)} \prod_{i=0}^{m-1} P^{v_i}_{T^*(v_{i+1}),\beta}(\eta_{i-1}<\infty)^s \right] \\
& = 
\mathrm{E}^{(N)}\left[  \sum_{v_{m-1}\in G_{m-1}(\rho)}\prod_{i=0}^{m-2} P^{v_i}_{T^*(v_{i+1}),\beta}(\eta_{i-1}<\infty)^s \left( \sum_{v\in G_1(v_{m-1})} P^{v_{m-1}}_{T^*(v),\beta}(\eta_{m-2}<\infty)^s \right) \right] \\
& = \mathrm{E}^{(N)}\left[ \sum_{v_{m-1}\in G_{m-1}(\rho)}\prod_{i=0}^{m-2} P^{v_i}_{T^*(v_{i+1}),\beta}(\eta_{i-1}<\infty)^s \right. \\
& \qquad \qquad \qquad \qquad \times \left. \mathrm{E}^{(N)}\left[\left. \sum_{v\in G_1(v_{m-1})} P^{v_{m-1}}_{T^*(v),\beta}(\eta_{m-2}<\infty)^s \right| T^*(v_{m-1}) \right]\right] \\
& = A_{N,\beta}(s)\mathrm{E}^{(N)}\left[ \sum_{v_{m-1}\in G_{m-1}(\rho)}\prod_{i=0}^{m-2} P^{v_i}_{T^*(v_{i+1}),\beta}(\eta_{i-1}<\infty)^s  \right] \\
& = A_{N,\beta}(s) \hat W_{m-1} ,   
\end{align*}
with 
\begin{align} \label{eq:defA}
A_{N,\beta}(s) = \mathrm{E}^{(N)}\left[ \sum_{v\in G_1(\rho)}P^{\rho}_{T^*(v),\beta}(\eta_{-1}<\infty)^s\right] = \mathrm{E}^{(N)}\left[ \sum_{i=1}^{d(\rho)} P^{\rho}_{T^*(w_i),\beta}(\eta_{-1}<\infty)^s\right] ,
\end{align}
where in the last expectation, we wrote $w_1,\dots w_{d(\rho)}$ for the children of $\rho$. We obtain recursively
\begin{align} \label{eq:Wrecursion}
W_m \leq \hat W_m \leq A_{N,\beta}(s)^m .
\end{align}
For $w_i$ as above, let $T_i$ be the subtree consisting of $w_i$ and all its descendants. The probability to hit $\rho^*$ on the reduced tree can be calculated as
\begin{align*}
P^{\rho}_{T^*(w_i),\beta}(\eta_{-1}<\infty) = \left( 1+\beta \sum_{1\leq j \leq d(\rho), j\neq i} e_\beta(T_j) \right)^{-1} ,
\end{align*}
with $e_\beta(T_j)$ as in \eqref{eq:defescapeprob}. Let us write $e_{\beta,j},j\geq 1$ for independent copies of $e_\beta(T_1)$. Using the exchangeability of the subtrees $T_j$ conditioned on $d(\rho)$, we obtain that \eqref{eq:defA} satisfies 
\begin{align*}
A_{N,\beta}(s) = \alpha N  \mathrm{E}^{(N)} \left[ \left( 1+\beta \sum_{j=1}^{N-1} e_{\beta,j} \right)^{-s}\right]
 + (1-\alpha)\sum_{k=1}^\infty  k p_k  \mathrm{E}^{(N)}\left[ \left( 1+\beta \sum_{j=1}^{k-1} e_{\beta,j} \right)^{-s}\right] .
\end{align*}
Choose $\delta>0$ such that $q=\mathrm{P}^{(N)}(e_{\beta,1}\geq \delta)>0$. The number of $e_{\beta,1},\dots ,e_{\beta,k-1}$ satisfying $ e_{\beta,j}\geq \delta$ is binomially distributed with parameters $k-1,q$. The probability that this number is smaller than $q(k-1)/2$ is then smaller than $e^{-c(k-1)}$ for some $c>0$. This implies
\begin{align*}
 k \mathrm{E}^{(N)}\left[ \left( 1+\beta \sum_{j=1}^{k-1} e_{\beta,j} \right)^{-s}\right] & \leq k e^{-c(k-1)} + k \left( 1+\tfrac{1}{2}\beta \delta q(k-1) \right)^{-s}  \\
 & \leq k e^{-c(k-1)} + k \left( 1+\tfrac{1}{2}\beta \delta q(k-1) \right)^{-2}  ,
\end{align*}
which tends to zero as $k \to \infty$. For $k>1$ fixed, 
\begin{align*}
k \mathrm{E}^{(N)}\left[ \left( 1+\beta \sum_{j=1}^{k-1} e_{\beta,j} \right)^{-s}\right] \xrightarrow[s\to \infty]{} 0
\end{align*}
by dominated convergence. Note that the contribution of a single child, corresponding to  $k=1$, does not vanish as $s\to \infty$. Again by dominated convergence, this implies 
\begin{align*}
A_{N,\beta}(s)  \xrightarrow[s\to \infty]{} (1-\alpha)p_1<1
\end{align*}
uniformly in $N$. In particular, we may choose $s\geq 2$ such that 
\begin{align*}
a_\beta :=  \sup_{N\geq N_0}A_{N,\beta}(s) <1 .
\end{align*}
With this choice, \eqref{eq:Wrecursion} gives $W_m\leq a_\beta^m$ and returning to \eqref{eq:regheightrecursion}, 
\begin{align*}
\bbP_\beta^{(N)} (H\geq m \mid R) \leq 
\left((1-\alpha) p_\ell \frac{1}{\beta \ell +1} \right)^{-1} a_\beta^{m/s} , 
\end{align*}
which decays exponentially as $m\to \infty$ uniformly in $N$. This yields the desired exponential moment for \eqref{eq:regheight}. 
\end{proof}

\begin{lem}\label{lem:momentuniform}
Suppose $p_0=0$. For any $\beta>0$ there exists $N_0\in \mathbb N$, such that for any $r\geq 1$  
\begin{align*}
\sup_{N\geq N_0} \bar{\mathbb{E}}^{(N)}_{\beta}[\tau_1^r]<\infty  . 
\end{align*}
\end{lem}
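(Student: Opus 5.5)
We want a bound on $\bar{\bbE}^{(N)}_\beta[\tau_1^r]$, uniform in $N\geq N_0$, for the level regeneration time on the locally finite tree with offspring law $\nu_N$. By \eqref{eq:conditionallawbound} it suffices to bound $\bbE^{(N)}_\beta[\tau_1^r]$ uniformly. The strategy is to split $\tau_1$ according to the generations visited before regeneration. By Lemma \ref{lem:momentuniform-height}, $|X_{\tau_1}|$ has an exponential moment uniformly in $N$. So it is enough to control the time the walk spends before $\tau_1$ in terms of $|X_{\tau_1}|$ and some quantities with uniformly bounded polynomial moments.

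\textbf{Step 1: decomposing $\tau_1$.} Before $\tau_1$, the walk stays inside the ball of radius $|X_{\tau_1}|$ around $\rho$. Up to time $\tau_1$ it makes at most $G$ failed regeneration attempts, with $G$ geometric with success probability at least $r_0$. I would write $\tau_1$ as a sum of at most $G$ excursion durations. Each excursion starts when a new generation is reached at a vertex $v$, runs until the walk returns to $v^*$, and is followed by the walk climbing back to a new maximal generation. Each excursion time is then bounded by the hitting time of $v^*$ from $v$ under conditioning on $R$, together with the time to reach generation $|v|+H_v+1$ afterwards.

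\textbf{Step 2: moments of a single excursion.} The time spent in the excursion before returning to $v^*$ is a finite excursion in the tree truncated at height $H_v$. Its duration should be bounded by the number of steps of a walk on a tree cut at depth $H_v$. I would control it by the number of distinct vertices $V$ visited, using the argument of Lemma \ref{lem:moment-uniform2nd}: the gaps $\theta_k-\theta_{k-1}$ between discovering new vertices satisfy $E[(\theta_k-\theta_{k-1})^q]\le C_q k^{2q+2}$. That argument used only Lemma \ref{lem:moment-finitesetescape}, and I believe it applies here for any finite set of finite-degree vertices. Then time $\le C V^{3}$ in moments, roughly. It remains to bound moments of $V$, the number of vertices visited before $\tau_1$. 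Here is the difficulty: marked vertices have $N$ children, so the tree grows with $N$.

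\textbf{Main obstacle: the number of visited vertices.} The visited vertex count has to be uniform in $N$ even though degrees $N$ appear. I would handle this by noting that at a marked vertex, the probability of stepping to the parent is $1/(N\beta+1)$. This is small, so the walk typically leaves a marked vertex downward and visits few of its children before leaving forever. More precisely, I would bound the number of visits to each vertex in the ball by a geometric variable. Its parameter would be bounded by the escape probability $\ge r_0$ from each newly reached vertex, by transience uniform in $N$ (via the monotone coupling of Lemma \ref{lem:momentuniform-height}). The number of distinct visited vertices is then at most the total number of steps down, and each step into a new subtree regenerates with probability $\ge r_0$ conditionally. I would aim for a bound $P(V\geq m)\le C e^{-cm}$ uniformly in $N$. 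This would follow by viewing each newly discovered vertex as an independent trial with escape probability at least $r_0$ that, if successful at a new maximal generation, ends the count. Combining $V$ with Hölder's inequality and the polynomial bound from Step 2 yields $\sup_{N\ge N_0}\bbE^{(N)}_\beta[\tau_1^r]<\infty$ for every $r$. If this vertex-counting step fails, the fallback is to use Lemma \ref{lem:moment-finitesetescape} directly on the ball of radius $|X_{\tau_1}|$, where marked vertices would be truncated to their visited children.
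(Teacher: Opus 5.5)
\textbf{There is a genuine gap.} Your overall architecture uses the same ingredients as the paper: passing to $\bbE^{(N)}_\beta$ via \eqref{eq:conditionallawbound}, exponential moments of $|X_{\tau_1}|$ from Lemma \ref{lem:momentuniform-height}, the polynomial bounds on the discovery times $\theta_k$ coming from Lemma \ref{lem:moment-finitesetescape}, and a uniform-in-$N$ control of the number $V$ of distinct visited vertices. However, the step you flag as the main obstacle is not proved, and the sketched argument for it fails.

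First, by \eqref{eq:defclassicalregtime} only a discovery at a \emph{new maximal generation} can be a regeneration time. Discoveries at lower generations are not trials at all: further children of a marked vertex, or side branches explored after a failed attempt. So a geometric count of attempts with success probability $\ge r_0$ says nothing about how many vertices are discovered between two attempts.

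Second, the outcomes are not decided in order. Whether the walk, after discovering $v$, ever returns to $v^*$ is typically not known when the next vertex is discovered, which is usually inside the subtree of $v$. These events are in general dependent through the environment, so you cannot chain the conditional bounds $\bbP^{(N)}_\beta(R)\le 1-r_0$ into a geometric domination. Similarly, after the first visit a subtree is no longer fresh, so the number of visits to a vertex is not geometric with parameter $r_0$. The claimed bound $P(V\ge m)\le Ce^{-cm}$ is therefore unsupported.

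Your fallback is circular. The ball of radius $|X_{\tau_1}|$ has of order $N^{|X_{\tau_1}|}$ vertices, and truncating marked vertices to their visited children presupposes exactly the bound on visited vertices that is missing. Your Step 1 excursion decomposition runs into the same problem: the time to climb back to a new maximal generation involves arbitrary exploration of lower generations.

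\textbf{The missing idea is to count visited vertices generation by generation,} because there failures \emph{are} decided in order. Let $w_1,w_2,\dots$ be the distinct vertices of generation $k$ in order of discovery. Then $w_{j+1}$ can only be discovered after the walk has returned from $w_j$ to $w_j^*$. The subtree rooted at $w_j$ is fresh when $w_j$ is discovered, so this return has conditional probability $\bbP^{(N)}_\beta(R)\le 1-r_0$, uniformly in $N\ge N_0$. Hence more than $n$ vertices of generation $k$ are visited before $\eta_n$ only with probability at most $(1-r_0)^n$.

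A union bound over generations $1,\dots,n$ gives
\begin{align*}
\bar\bbP^{(N)}_\beta\big(|\{X_0,\dots,X_{\eta_n}\}|>n^2\big)\le r_0^{-1}n(1-r_0)^n .
\end{align*}
This is only a stretched-exponential tail for $V$, but it suffices. One then splits
\begin{align*}
\bar\bbP^{(N)}_\beta(\tau_1>n^7)\le \bar\bbP^{(N)}_\beta(|X_{\tau_1}|>n)+\bar\bbP^{(N)}_\beta(\eta_n>n^7).
\end{align*}
The second term is bounded by the vertex count above plus $r_0^{-1}\bbP^{(N)}_\beta(\theta_{n^2}>n^7)$. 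Markov's inequality and \eqref{eq:thetabound2} bound the latter by $r_0^{-1}C_q n^{4-q}$ with $q$ arbitrary, which yields all moments of $\tau_1$ uniformly in $N$.
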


\begin{proof}
Let $N_0$ be sufficiently large, such that $r_0$, as defined in \eqref{eq:returnprobbound} is positive.  Recall that $\eta_m$ is the hitting time of generation $m$ and as in the proof of Lemma \ref{lem:moment-uniform2nd}, denote by $\theta_k$ the times at which a new vertex has been discovered, stopping when $\rho^*$ is visited. The arguments leading to \eqref{eq:thetatailbound} show that for any $q\geq 1$ there exists a constant $C_q$, such that
\begin{align*}
E_{T^*,\beta}[(\theta_k-\theta_{k-1})^q ] = E_{D,\beta}[(\theta_k-\theta_{k-1})^q ] 
 \leq C_qk^{2q+2} 
\end{align*} 
for any augmented tree $T^*$ or offspring sequence $D$, respectively, any $\beta\geq 0$ and any $k\geq 1$. Minkowski's inequality then gives
\begin{align} \label{eq:thetabound2}
E_{T^*,\beta}[\theta_k^q ] \leq C_qk^{3q+2} ,
\end{align}
with a modified constant $C_q$. 

We now argue similar to the proof of \cite[Lemma 3.6]{glatzel2024CLT}, which is based on \cite{Aidekon2010LargeDeviations}. 
The probability for a large regeneration time can be bounded as
\begin{align*}
\bar{\bbP}_\beta^{(N)}(\tau_1> n^7 ) 
& \leq \bar{\bbP}_\beta^{(N)}(\tau_1 >n^7 , \tau_1\geq \eta_{n} ) + \bar{\bbP}_\beta^{(N)}(\tau_1\geq n^7, \tau_1 <\eta_{n}) \\
& \leq \bar{\bbP}_\beta^{(N)}(|X_{\tau_1}| >n  ) + \bar{\bbP}_\beta^{(N)}(\eta_{n}>n^7 ) .
\end{align*}
The first probability decays exponentially as $n\to \infty$ uniformly in $N$ by Lemma \ref{lem:momentuniform-height}. The hitting time $\eta_n$ of generation $n$ is larger than $n^7$ when either more than $n^2$ different vertices have been visited until $\eta_n$ or it takes more than $n^7$ steps to visit $n^2$ distinct vertices. In the latter case, none of these vertices is $\rho^*$ when conditioning on $R^c$. Therefore, if $\eta_n$ is larger than $n^7$, either more than  $n^2$ different vertices have been visited until $\eta_n$ or $\theta_{n^2}>n^7$, such that 
\begin{align} \label{eq:regtimebound}
\bar{\bbP}_\beta^{(N)}(\eta_{n}>n^7 ) \leq \bar\bbP_\beta^{(N)}(|\{X_0,\dots ,X_{\eta_n}\} |>n^2) + \bar\bbP_\beta^{(N)}(\theta_{n^2}>n^7) .
\end{align} 
If $|\{X_0,\dots ,X_{\eta_n}\} |>n^2$, the random walk has to visit at least $n$ distinct vertices in one of the first $n$ generations, which implies $n$ returns to a parent. Following \cite[Section 4.4.1]{glatzel2024CLT}, we have 
\begin{align}
\bbP_\beta^{(N)}(|\{X_0,\dots ,X_{\eta_n}\} \cap G_k(\rho)|>n ) \leq \bbP_\beta^{(N)}(R)^n \leq (1-r_0)^n
\end{align}
by \eqref{eq:returnprobbound} for $1\leq k\leq n$. Therefore, by \eqref{eq:conditionallawbound} and a union bound,
\begin{align}
\bar{\bbP}_\beta^{(N)}(|\{X_0,\dots ,X_{\eta_n}\} |>n^2) \leq r_0^{-1} n(1-r_0)^n .
\end{align} 
For the second probability in \eqref{eq:regtimebound}, the bound \eqref{eq:conditionallawbound}, Markov's inequality and the moment bound in \eqref{eq:thetabound2} yield
\begin{align}
\bar\bbP_\beta^{(N)}(\theta_{n^2}>n^7) \leq r_0^{-1} \bbP_\beta^{(N)}(\theta_{n^2}>n^7) \leq r_0^{-1}C_q n^{4-q}, 
\end{align}
with $q\geq 1$ arbitrarily large. This implies the finite $r$-th moment of $\tau_1$, when $q$ is large enough. 
\end{proof}

\textbf{Statement on AI usage:}  {ChatGPT-6} has been utilized to check the mathematical arguments for correctness. The discussions with the AI fixed a gap in the author's first proof of Lemma \ref{lem:momentuniform} and modified the original reasoning in this proof. These discussions also lead to a sharper estimate in Theorem \ref{thm:speedmonotone}. Mathematical arguments which have been suggested by the AI have been reformulated and carefully checked by the author, who takes full responsibility for the manuscript.

\bibliographystyle{alpha}
\bibliography{bibGWT}

\bigskip

{\footnotesize
\noindent
TU Dortmund, \\
Fakult\"at f\"ur Mathematik, \\
Vogelpothsweg 87, 
44227 Dortmund, 
Germany, \\
jan.nagel@tu-dortmund.de
}

\end{document}